\definecolor{verylight}{gray}{0.97}
\definecolor{light}{gray}{0.9}
\definecolor{medium}{gray}{0.85}
\def\NZQ{\mathbb}               
\def\NN{{\NZQ N}}
\def\ZZ{{\NZQ Z}}
\def\frk{\frak}               
\def\Phi{{\frk n}}
\def\Phi{{\frk N}}
\def\ab{\mathbf{a}}
\def\cb{\mathbf{c}}
\def\opn#1#2{\def#1{\operatorname{#2}}} 
\opn\chara{char} \opn\length{\ell} \opn\pd{pd} \opn\rk{rk}
\opn\projdim{proj\,dim} \opn\injdim{inj\,dim} \opn\rank{rank}
\opn\depth{depth} \opn\grade{grade} \opn\height{height}
\opn\embdim{emb\,dim} \opn\codim{codim} \opn\sgn{sgn}
\opn\Tr{Tr} \opn\bigrank{big\,rank}
\opn\superheight{superheight}\opn\lcm{lcm}
\opn\trdeg{tr\,deg}
\opn\reg{reg} \opn\lreg{lreg} \opn\ini{in} \opn\lpd{lpd}
\opn\size{size}\opn\bigsize{bigsize}
\opn\cosize{cosize}\opn\bigcosize{bigcosize}
\opn\sdepth{sdepth}\opn\sreg{sreg}
\opn\link{link}\opn\fdepth{fdepth}
\opn\Graver{Graver}
\opn\div{div} \opn\Div{Div} \opn\cl{cl} \opn\Cl{Cl} \opn\Cor{Cor}
\opn\Spec{Spec} \opn\Supp{Supp} \opn\supp{supp} \opn\Sing{Sing}
\opn\Ass{Ass} \opn\Min{Min}\opn\Mon{Mon} \opn\dstab{dstab} \opn\astab{astab}
\opn\Ann{Ann} \opn\Rad{Rad} \opn\Soc{Soc} \opn\Gr{Gr}
\opn\Im{Im} \opn\Ker{Ker} \opn\Coker{Coker} \opn\Am{Am}
\opn\Hom{Hom} \opn\Tor{Tor} \opn\Ext{Ext} \opn\End{End}
\opn\Aut{Aut} \opn\id{id} \opn\span{span}
\opn\nat{nat}
\opn\pff{pf}
\opn\Pf{Pf} \opn\GL{GL} \opn\SL{SL} \opn\mod{mod} \opn\ord{ord}
\opn\Gin{Gin} \opn\Hilb{Hilb}\opn\sort{sort} \opn\Gale{Gale}
\opn\aff{aff} \opn\conv{conv} \opn\relint{relint} \opn\st{st}   \opn\cone{cone}
\opn\lk{lk} \opn\cn{cn} \opn\core{core} \opn\vol{vol}
\opn\link{link} \opn\star{star}\opn\lex{lex} \opn\Gr{Gr}
\opn\gr{gr}
\def\pot#1#2{#1[\kern-0.28ex[#2]\kern-0.28ex]}
\opn\dirlim{\underrightarrow{\lim}}
\opn\inivlim{\underleftarrow{\lim}}
\let\union=\cup
\def\Implies{\ifmmode\Longrightarrow \else
        \unskip${}\Longrightarrow{}$\ignorespaces\fi}
\def\implies{\ifmmode\Rightarrow \else
        \unskip${}\Rightarrow{}$\ignorespaces\fi}
\def\iff{\ifmmode\Longleftrightarrow \else
        \unskip${}\Longleftrightarrow{}$\ignorespaces\fi}
\newtheorem{Theorem}{Theorem}[section]
\newtheorem{Lemma}[Theorem]{Lemma}
\newtheorem{Proposition}[Theorem]{Proposition}
\newtheorem{Remark}[Theorem]{Remark}
\newtheorem{Example}[Theorem]{Example}
\newtheorem{Definition}[Theorem]{Definition}
\let\epsilon\varepsilon
\let\kappa=\varkappa
\def\qed{\ifhmode\textqed\fi
      \ifmmode\ifinner\quad\qedsymbol\else\dispqed\fi\fi}
\def\textqed{\unskip\nobreak\penalty50
       \hskip2em\hbox{}\nobreak\hfil\qedsymbol
       \parfillskip=0pt \finalhyphendemerits=0}
\def\dispqed{\rlap{\qquad\qedsymbol}}
\opn\dis{dis}
\def\pnt{{\raise0.5mm\hbox{\large\bf.}}}
\opn\Lex{Lex}
\begin{document}


\title{The strongly robust simplicial complex  of monomial curves}
\author[1]{ Dimitra Kosta }
\author[2] {Apostolos Thoma}
\author[3] {Marius Vladoiu}
\thanks{Corresponding author: Dimitra Kosta}
\address{Dimitra Kosta, School of Mathematics, University of Edinburgh and Maxwell Institute for Mathematical Sciences, United Kingdom }
\email{D.Kosta@ed.ac.uk}

\address{Apostolos Thoma, Department of Mathematics, University of Ioannina, Ioannina 45110, Greece}
\email{athoma@uoi.gr}

\address{Marius Vladoiu, Faculty of Mathematics and Computer Science, University of Bucharest, Str. Academiei 14, Bucharest, RO-010014, Romania, and}
\address{``Simion Stoilow" Institute of Mathematics of the Romanian Academy, P.O. Box 1-764, 014700, Bucharest, Romania}
\email{vladoiu@fmi.unibuc.ro}

\subjclass[2020]{ 05E45, 13F65, 13P10, 14M25}
\keywords{ Toric ideals,  Graver basis, Monomial curves, indispensable elements, Robust ideals, Simplicial complex}

\begin{abstract} To every simple toric ideal $I_T$ one can associate the strongly robust
simplicial complex $\Delta _T$,  which determines the strongly robust property for
all ideals that have $I_T$ as their bouquet ideal.  We show that for the simple toric ideals of monomial curves in $\mathbb{A}^{s}$, the strongly robust simplicial complex $\Delta _T$ is either $\{\emptyset \}$ or
contains exactly one 0-dimensional face. In the case of monomial curves in $\mathbb{A}^{3}$,  the strongly robust simplicial complex $\Delta _T$ contains one 0-dimensional face if and only if the toric ideal $I_T$ is a complete intersection ideal with exactly two Betti degrees. Finally, we provide a construction to produce infinitely many strongly robust ideals with bouquet ideal the ideal of a monomial curve and show that they are all produced this way.
\end{abstract}
\maketitle

\section{Introduction}

  Let   $A\in\mathbb{Z}^{m\times n}$ be an integer matrix such that
  $\Ker_{\mathbb{Z}}(A)\cap \mathbb{N}^n=\{{\mathbf{0}}\}$.
  The toric ideal of $A$ is the ideal $I_A\subset K[x_1,\ldots,x_n]$
  generated by the binomials ${x}^{\mathbf{u}^+}-{x}^{{\mathbf{u}}^-}$
 where $K$ is a field, ${\mathbf{u}}\in\Ker_{\mathbf{Z}}(A)$
and ${\bf u}={\bf u}^+-{\bf u}^-$ is the unique expression of  ${\bf u}$ as a difference of two non-negative vectors with disjoint support, see \cite[Chapter 4]{St}.
A toric ideal is called robust if it is minimally generated by its Universal Gr{\"o}bner basis, where the  Universal Gr{\"o}bner basis is the union of all reduced   Gr{\"o}bner bases, see \cite{BR}.
  A {\it strongly robust} toric ideal  is a toric ideal $I_A$ for which  the Graver basis $\Gr(I_A)$
is a minimal system of generators, see \cite{S}. The condition
$\Ker_{\ZZ}(A)\cap \NN^n=\{{\mathbf{0}}\}$ implies that any minimal binomial generating set is contained in the Graver basis, see \cite[Theorem 2.3]{CTV2}. For strongly robust ideals then the Graver basis is the unique minimal system of generators and, thus, any reduced Gr{\"o}bner basis as well as the Universal Gr{\"o}bner basis are identical with the Graver basis, since all of them contain a minimal system of generators and they are  subsets of the Graver basis (see \cite[Chapter 4]{St}). We conclude that for a strongly robust toric ideal $I_A$ the
following sets are identical: the set of indispensable elements, any minimal system of binomial generators, any reduced Gr{\"o}bner basis, the Universal Gr{\"o}bner basis and the Graver basis. Therefore strongly robust toric ideals are robust.
The classical example of strongly robust ideals are the Lawrence ideals, see \cite[Chapter 7]{St}.
 There are several articles in the literature studying  robust related properties of ideals; see \cite{BZ, B, cng, CNG, SZ} for robust ideals, \cite{BR,  BBDLMNS} for robust toric ideals, \cite{G-MT, T} for generalized robust toric ideals and
\cite{    GP, KTV, PTV, PTV2, St,  S} for strongly robust toric ideals.

To characterize combinatorially the strongly robust property of toric ideals which have in common the same bouquet ideal $I_T$, in \cite{KTV}, we defined a simplicial complex, the strongly robust simplicial complex $\Delta_T$, the faces of which determine the strongly robust property. In particular, let $I_A$ be a toric ideal with bouquet ideal $I_T$, the ideal $I_A$ is strongly robust if and only
if the set $\omega$ of indices $i$, such that the $i$-th bouquet of $I_A$ is non-mixed,
is a face of  $\Delta_T$, see \cite[Theorem 3.6]{KTV}. Thus, understanding the strongly robust property of toric ideals $I_A$ is equivalent to understanding the strongly robust simplicial complex $\Delta_T$ for
simple toric ideals $I_T$. Simple toric ideals are ideals for which every bouquet is a singleton. Bouquet ideals are always simple. A method for the computation of the strongly robust simplicial complex $\Delta_T$ for a particular simple toric ideal $I_T$ was given in \cite[Theorem 3.7]{KTV}, however an interesting problem is to understand the strongly robust  simplicial complex $\Delta_T$ for classes of simple toric ideals. In this direction, we determine the strongly robust simplicial complex $\Delta_T$ for the simple toric ideals of monomial curves, which are toric ideals defined by $1\times s$-matrices, where $s\ge 3$.  For $s=2$, the toric ideal $I_T$ is principal and thus it is never simple but always strongly robust. For $s\ge 3$, the ideal of a monomial curve is never strongly robust, see Remark \ref{remark}.
 Toric ideals with bouquet ideal the toric ideal of a monomial curve
 include toric ideals of varieties with any dimension as well as varieties with any codimension greater than one.
 For example, a toric ideal with bouquet ideal the toric ideal $I_T$ of a monomial curve defined by the matrix $T=(24, 40, 41, 60, 80)$ is the ideal $I_A$ of a toric variety of dimension 7 and codimension 4
in Example \ref{E}, where we explain why such an ideal $I_A$ is strongly robust. In Section~\ref{genmat}, we provide infinitely many examples of strongly robust toric ideals  with bouquet ideal the toric ideal  of a monomial curve
and we prove that all such toric ideals are provided by this method.

In \cite{S}, Sullivant asked the question: does every strongly robust toric ideal $I_A$ of codimension $r$ have at least $r$ mixed bouquets? Since bouquets preserve the codimension and $s$ is the number of bouquets of $I_A$, Sullivant's question is equivalent to a question about the dimension of the strongly robust simplicial complex of its bouquet ideal $I_T$: is it true that simple toric ideals $I_T$ of codimension $r$ in the polynomial ring of $s$ variables have $\dim \Delta_T<s-r$? In Section~\ref{dimComplex}, we give an affirmative answer to the question of Sullivant for the simple toric ideals of monomial curves.

The structure of the paper is the following. In Section~\ref{section:prelim}, we present the notation, give definitions and previous results that will be required throughout the paper. Then, in Section~\ref{dimComplex}, we firstly proceed by showing that the dimension of the strongly robust  simplicial complex for a monomial curve $T$ is $\dim \Delta_{T} \leq 0$. Note that for monomial curves the codimension is $s-1$ thus $\dim \Delta_{T} \leq 0<1=s-(s-1)$, which agrees with Sullivant's conjecture. Section~\ref{section:CI_robustness} contains results that describe the circuits of $\Lambda(T)_{i}$
 which lead to Theorem~\ref{main1} that links the properties of complete intersection and strongly robustness for monomial curves in $\mathbb{A}^s$. Using this we give a full description of the strongly robust  simplicial complex in the case of monomial curves in $\mathbb{A}^3$ in Theorem~\ref{curveA3} and show that for monomial curves in $\mathbb{A}^s$ the strongly robust  simplicial complex is either $\{\emptyset \}$ or contains exactly one $0$-dimensional face in Theorem~\ref{Delta}.  In Section~\ref{primitiveRobustness}, we extend the notions of a primitive element as well as the Graver basis and give a necessary and sufficient condition in terms of primitive elements (or the Graver basis) on whether one specific element is the unique $0$-dimensional face of $\Delta_T$. Finally, in Section~\ref{genmat}, we use generalized Lawrence matrices to describe completely all matrices $A$ for which the toric ideal $I_A$ is strongly robust and which have bouquet ideal the toric ideal of a monomial curve.

 \section{Preliminaries} \label{section:prelim}

 Let $A=(\mathbf{a}_1, \ldots, \mathbf{a}_n)$ be an integer matrix in $\mathbb{Z}^{m \times n}$, with column vectors $\mathbf{a}_1,\ldots,\mathbf{a}_n$ and such that $\Ker_{\ZZ}(A)\cap \NN^n=\{{\mathbf{0}}\}$.
We say that ${\bf u}={\bf v}+_{c} {\bf w}$ is a conformal decomposition of the vector
${\bf u}\in  \Ker_\mathbb{Z}(A)$ if ${\bf u}={\bf v}+{\bf w}$ and ${\bf u}^+={\bf v}^++{\bf w}^+, {\bf u}^-={\bf v}^-+{\bf w}^-$,
where ${\bf v}, {\bf w} \in  \Ker_{\ZZ}(A)$. The conformal decomposition is called proper if both
 ${\bf v}$ and  ${\bf w}$ are not zero.
For the conformality, in terms of signs, the corresponding notation is the following:
    $+ = \oplus +_{c} \oplus$,
    $- = \ominus+_{c} \ominus$,
    $ 0 \ = \ 0 +_{c}  0 $.
 where the symbol $\ominus $ means that the corresponding integer is nonpositive and the symbol $\oplus $ nonnegative.
By $\Gr(A)$ we denote the set  of elements in $\Ker_\mathbb{Z}(A)$ that do not have a  proper conformal decomposition.
 A binomial ${\bf x}^{{\bf u}^+}-{\bf x}^{{\bf u}^-}\in I_A$
 is called \textit{primitive} if ${\bf u}\in \Gr(A).$ The set of the
primitive binomials is finite and it is called the \textit{Graver basis} of $I_A$ and is denoted by $\Gr(I_A)$, \cite[Chapter 4]{St}.

We recall from \cite[Definition 3.9]{HS} that for vectors ${\bf u},{\bf v},{\bf w}\in\Ker_{\mathbb{Z}}(A)$ such that ${\bf u}={\bf v}+{\bf w}$, the sum is said to be a \textit{semiconformal decomposition} of ${\bf u}$, written ${\bf u}={\bf v}+_{sc} {\bf w}$, if $v_i>0$ implies that $w_i\geq 0$, and $w_i<0$ implies that $v_i\leq 0$, for all $1\leq i\leq n$.  The decomposition is called {\it proper} if both ${\bf v}, {\bf w}$ are nonzero. The set of indispensable elements $S(A)$ of $A$ consists  of all nonzero vectors in $\Ker_{\ZZ}({A})$ with no proper semiconformal decomposition.
For the semiconformality, in terms of signs, the corresponding notation is the following:
 $+ = * +_{sc} \oplus $, $  - =  \ominus +_{sc}  * $, $  0 =  \ominus +_{sc}  \oplus  $, where the symbol $*$ means that it can take any value.

 A binomial ${\bf x}^{{\bf u}^+}-{\bf x}^{{\bf u}^-}\in I_A$
 is called \textit{indispensable} binomial if it belongs to the intersection of all minimal systems of binomial generators of $I_A$,  up to identification of opposite binomials. The set of indispensable binomials is  $S(I_A)=\{{\bf x}^{{\bf u}^+}-{\bf x}^{{\bf u}^-}|{\bf u}\in S(A)\}$ by \cite[Lemma 3.10]{HS} and \cite[Proposition 1.1]{CTV}.

  {\em Circuits} are irreducible binomials of a toric ideal $I_A$ with minimal support. In vector notation, a vector ${\bf u}\in \Ker_{\ZZ}(A)$ is called a circuit of the matrix $A$ if supp$({\bf u})$ is minimal and the components of ${\bf u}$ are relatively prime.

 To the matrix $A=(\mathbf{a}_1, \ldots, \mathbf{a}_n)$ we associate its Gale transform, which is the $n\times (n-r)$ matrix whose columns span the lattice $\Ker_{\ZZ}(A)$, where $r$ is the rank of $A$. We will denote the set of ordered row vectors of the Gale transform by $\{G({\bf a}_1), \dots, G({\bf a}_n)\}$. The vector ${\bf a}_i$ is called {\em free} if its Gale transform $G({\bf a}_i)$ is equal to the zero vector, which means that $i$ is not contained in the support of any element in $\Ker_{\mathbb{Z}}(A)$.
 The {\em bouquet graph} $G_A$ of $I_A$ is the graph on the set of vertices $\{{\bf a}_1,\dots, {\bf a}_n\}$, whose edge set $E_A$ consists of those $\{\mathbf{a}_i,\mathbf{a}_j\}$ for which $G({\bf a}_i)$ is a rational multiple of $G({\bf a}_j)$ and vice-versa. The connected components of the graph $G_A$ are called {\em bouquets}.

It follows from the definition that the free vectors of $A$ form one bouquet, which we call the {\em free bouquet} of $G_A$.  The non-free bouquets are of two types: {\em mixed} and {\em non-mixed}. A non-free bouquet is mixed if contains an edge $\{\mathbf{a}_i,\mathbf{a}_j\}$ such that $G({\bf a}_i)=\lambda G({\bf a}_j)$ for some $\lambda<0$, and is non-mixed if it is either an isolated vertex or for all of its edges $\{\mathbf{a}_i,\mathbf{a}_j\}$ we have $G({\bf a}_i)=\lambda G({\bf a}_j)$ with $\lambda>0$, see \cite[Lemma 1.2]{PTV}.

Let $ B_1,  B_2, \ldots,  B_s$ be the bouquets of $I_A$.
We reorder the vectors $\ab_1,\dots,\ab_n$ to $\ab_{11},\ab_{12}, \dots,\ab_{1k_1},\ \ab_{21},\ab_{22}, \dots,\ab_{2k_2},\ \dots, \ab_{s1},\ab_{s2}, \dots,\ab_{sk_s}$ in such a way that the first $k_1$ vectors belong to the bouquet $B_1$, the next $k_2$ to $B_2$ and so on up to the last $k_s$ that belong to the bouquet $B_s$. Note that $k_1+k_2+\dots +k_s=n$.
For each bouquet $B_i$ we define two vectors $\cb_{B_i}$ and $\ab_{B_i}$. If the bouquet $B_i$ is free then we set $\cb_{B_i}\in\ZZ^n$ to be any nonzero vector such that $\supp(\cb_{B_i})=\{i1,\dots , ik_i\}$ and with the property that the first nonzero coordinate, $c_{i1}$, is positive. For a non-free bouquet $B_i$ of $A$, consider  the Gale transforms of the elements in $B_i$. All the Gale transforms are nonzero, since the bouquet is non-free, and pairwise linearly dependent, since they belong to the same bouquet. Therefore, there exists a nonzero coordinate $l$ in all of them. Let $g_l=\gcd(G({\bf a}_{i1})_l, G({\bf a}_{i2})_l,\dots , G({\bf a}_{ik_i})_l)$, where $({\bf w})_l$ is the $l$-th component of a vector ${\bf w}$. Then ${\bf c}_{B_i}$  is the vector in $\ZZ^n$ whose $qj$-th component  is $0$ if $q\not= i$, and   $c_{ij}=\varepsilon_{i1}(G({\bf a}_{ij})_l)/g_l$, where $\varepsilon_{i1}$ represents the sign of the integer $G({\bf a}_{ij})_l$. Note that $c_{i1}$ is always positive. Then the vector $\ab_{B_i}$ (see \cite[Definition 1.7]{PTV}), is defined as $\ab_{B_i}=\sum_{j=1}^n ({c_{B_i})_j}\ab_j\in\ZZ^m$.

If $B_i$ is a  non-free bouquet of $A$, then $B_i$ is a mixed bouquet if and only if the vector ${\bf c}_{B_i}$ has a negative and a positive coordinate, and  $B_i$ is non-mixed if and only if the vector $\cb_{B_i}$ has all nonzero coordinates positive, see \cite[Lemma 1.6]{PTV}. The toric ideal $I_{A_B}$ associated to the matrix $A_B$, whose columns are the vectors $\ab_{B_i}$, $1\leq i\leq s$, is called the bouquet ideal of $I_A$.

Let ${\mathbf u}=(u_1, u_2, \ldots, u_s)\in  \Ker_{\ZZ}({A_B})$
then the linear map $$D({\mathbf u})=(c_{11}u_1, c_{12}u_1,\ldots ,c_{1k_1}u_1, c_{21}u_2, \ldots ,c_{2k_2}u_2, \ldots ,c_{s1}u_s, \ldots ,c_{sk_s}u_s),   $$
where all $c_{j1}, 1 \leq j \leq s$, are positive, is an isomorphism from $ \Ker_{\ZZ}({A_B})$ to $ \Ker_{\ZZ}(A)$, see \cite[Theorem 1.9]{PTV}.

The cardinality of the sets of different toric bases depends
only on the signatures of the bouquets and the bouquet ideal, see \cite[Theorem 2.3, Theorem 2.5]{KTV}
and \cite[Theorem 1.11]{PTV}.

Note that the bouquet ideal is simple: a toric ideal is called \textit{simple} if every bouquet is a singleton, in other words if $I_T\subset K[x_1,\dots , x_s]$ and has $s$ bouquets. The bouquet ideal of a simple toric ideal $I_A$ is $I_A$ itself. Let $I_A$ be the ideal of a monomial curve, where $A=(n_1, n_2,  \dots ,n_s)$ is an $1\times s$ matrix. In this case for any $i,j\in [s]$ there exists one circuit with support only $\{i, j\}$. Then for $s\geq 3$ and any two $n_k, n_l$ there exists a circuit that is zero on the $k^{th}$ component and nonzero on the
$l^{th}$ component and vice versa. Therefore the Gale transforms $G(n_k), G(n_l)$ are not the one multiple of the other. Thus all toric ideals of monomial curves  are simple if $s\geq 3$.

 \begin{Definition}
 {\em Let $I_T\subset K[x_1,\dots , x_s]$ be a simple toric ideal and $\omega \subset \{1, \dots , s\}$. A toric ideal $I_A$
 is called $T_{\omega}$-robust ideal if and only if
 \begin{itemize}
  \item the bouquet ideal of $I_A$ is $I_T$ and
  \item $\omega =\{i\in [s]| B_i \ \text{is non-mixed}\}.$
 \end{itemize}
 We denote by $$S_{\omega}(T)=\{{\bf u}\in \Gr(T)| \ D({\bf u})\in S(A)\}$$ and call $S_{\omega}(T)$ the $T_{\omega}$-indispensable set, where $I_A$ is an
 $T_{\omega}$-robust toric ideal and $S(A)$ is the set of indispensable elements of $A$.}
 \end{Definition}

The second part of the definition is correctly defined, since in \cite{KTV} we showed that the set of elements ${\bf u}$ which belong to $\Gr(T)$, such that $D({\bf u})$ is indispensable in a
 $T_{\omega}$-robust toric ideal $I_{A}$,
 does not depend on the $I_A$ chosen, but only on $T$ and $\omega$.

In \cite{KTV} we  introduced a simplicial complex, which determines the strongly robust property for toric ideals.

\begin{Definition}  {\em The set $\Delta _T =\{\omega\subseteq [s] \ | \ S_{\omega }(T)=\Gr(T)\}$ is called the {\it strongly robust complex} of $T$.}
\end{Definition}

 According to \cite[Corollary 3.5, Theorem 3.6]{KTV}, the  set $\Delta _T$ is a simplicial complex, which
 determines the strongly robust property for toric ideals.

\begin{Theorem} \label{face} \cite[Theorem 3.6]{KTV} Let $I_A$ be a $T_{\omega}$-robust toric ideal. The toric ideal $I_A$ is strongly robust if and only if $\omega$ is a face of the strongly
 robust complex $\Delta _T$.
\end{Theorem}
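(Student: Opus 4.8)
The plan is to transport the whole question across the isomorphism $D\colon\Ker_{\ZZ}(A_B)\to\Ker_{\ZZ}(A)$ of \cite[Theorem 1.9]{PTV}, turning a statement about $I_A$ into a statement about the bouquet ideal $I_T=I_{A_B}$. First I would record the reformulation of strong robustness already isolated in the introduction: because $\Ker_{\ZZ}(A)\cap\NN^n=\{\mathbf 0\}$ forces $S(A)\subseteq M\subseteq\Gr(A)$ for every minimal binomial generating set $M$, and because every conformal decomposition is in particular semiconformal (so $S(A)\subseteq\Gr(A)$ unconditionally), the ideal $I_A$ is strongly robust exactly when its Graver basis is minimal, i.e.\ exactly when $\Gr(A)=S(A)$. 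Thus everything reduces to deciding when every Graver element of $A$ is indispensable.

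Next I would prove that $D$ restricts to a bijection $\Gr(T)\to\Gr(A)$. Inside a bouquet $B_i$ every coordinate of $D(\mathbf u)$ has the shape $c_{ij}u_i$ with $c_{ij}\neq 0$ on the support, and scaling one coordinate by a nonzero constant does not change whether a sum is conformal at that coordinate; hence $D(\mathbf u)=D(\mathbf v)+D(\mathbf w)$ is a conformal decomposition in $\Ker_{\ZZ}(A)$ if and only if $\mathbf u=\mathbf v+\mathbf w$ is conformal in $\Ker_{\ZZ}(A_B)$, regardless of the signs of the $c_{ij}$. Since $D$ is injective, properness matches on both sides, giving $\mathbf u\in\Gr(T)\iff D(\mathbf u)\in\Gr(A)$ and therefore $\Gr(A)=D(\Gr(T))$.

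The technical heart is the analogous analysis for semiconformal decompositions, where the mixed/non-mixed dichotomy finally enters, because semiconformality is asymmetric in its two summands. Dividing the relation $D(\mathbf v)+_{sc}D(\mathbf w)$ by $c_{ij}$ at a coordinate $ij$ yields, when $c_{ij}>0$, the semiconformal condition on $u_i=v_i+w_i$ in the given order, but when $c_{ij}<0$ it yields the same condition with $\mathbf v$ and $\mathbf w$ interchanged. For a non-mixed bouquet all $c_{ij}$ have the same sign (by \cite[Lemma 1.6]{PTV}), so only one order is imposed and the constraint at $i$ is genuinely semiconformal; for a mixed bouquet both signs of $c_{ij}$ occur, so both orders must hold at once, and this is precisely conformality at $i$. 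Consequently $D(\mathbf u)$ has no proper semiconformal decomposition, that is $D(\mathbf u)\in S(A)$, if and only if $\mathbf u$ has no proper decomposition that is semiconformal on the non-mixed coordinates $\omega$ and conformal on the complementary mixed coordinates; combined with the previous step this identifies $S(A)=D(S_\omega(T))$, where $S_\omega(T)$ is the well-defined set of \cite{KTV} (independent of the chosen $I_A$).

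Finally I would assemble the pieces. Because $D$ is injective and $S_\omega(T)\subseteq\Gr(T)$, the equality $\Gr(A)=S(A)$ is equivalent to $D(\Gr(T))=D(S_\omega(T))$, hence to $\Gr(T)=S_\omega(T)$, which by definition of the strongly robust complex means $\omega\in\Delta_T$; tracing back through the first step, this says exactly that $I_A$ is strongly robust. The hard part is the third step: keeping careful track of the asymmetric semiconformal condition as $D$ mixes signs inside mixed bouquets, and verifying that the resulting hybrid condition (conformal on mixed positions, semiconformal on non-mixed positions) is exactly the one packaged into $S_\omega(T)$.
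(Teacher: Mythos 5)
The paper offers no proof of this statement---it is imported verbatim from \cite[Theorem 3.6]{KTV}---so there is no internal argument to compare against; judged on its own, your proof is correct and reconstructs what the cited proof must do. Steps (1), (2) and (4) are facts the paper already takes as standing: the equivalence ``strongly robust $\iff \Gr(A)=S(A)$'' is the sandwich argument $S(A)\subseteq M\subseteq \Gr(A)$ from the introduction, $\Gr(A)=D(\Gr(T))$ is \cite[Theorem 1.11]{PTV}, and the final assembly is bookkeeping with the injectivity of $D$. The genuine content is your step (3), and it is worth being precise about the role it plays: since the paper \emph{defines} $S_\omega(T)$ as $\{\mathbf u\in\Gr(T)\mid D(\mathbf u)\in S(A)\}$ for an arbitrary $T_\omega$-robust $I_A$, the identity $S(A)=D(S_\omega(T))$ for \emph{that} $I_A$ is immediate from $S(A)\subseteq\Gr(A)=D(\Gr(T))$; what your sign analysis actually proves is that this set depends only on $T$ and $\omega$ and not on the chosen $I_A$ (semiconformality at a coordinate is the single implication ``not ($v>0$ and $w<0$)'', a negative $c_{ij}$ reverses it, a non-mixed bouquet imposes one order, a mixed bouquet imposes both and hence conformality). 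That independence is exactly the well-definedness the paper also delegates to \cite{KTV}, and without it you could not pass from the $I_A$ used to define $S_\omega(T)$ to the given one---so step (3) is load-bearing, not decorative, and your treatment of it is sound. Two small points to make explicit if you write this up: all $c_{ij}$ are nonzero on the support of a non-free bouquet (so dividing by them is legitimate), and free-bouquet coordinates impose no constraint because every kernel element vanishes there.
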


The following theorem provides a way to compute the strongly robust complex of a simple toric ideal $I_T$.
By $\Lambda (T)$ we denote the second Lawrence lifting of $T$, which is the $(m+s)\times 2s$ matrix
${\begin{pmatrix} T & 0 \\
I_s & I_s
\end{pmatrix}}.$ By $\Lambda (T)_{\omega}$ we denote the matrix taken from $\Lambda (T)$ by removing the $(m+i)$-th row and the $(s+i)$-th column for each $i\in \omega$.

\begin{Example} {\em  For $T=(n_1, n_2, n_3, n_4)$ and $\omega=\{ 3\}$, the $\Lambda (T)_{\omega}$
matrix is $${\begin{pmatrix} n_1 & n_2 & n_3 & n_4 & 0 & 0 & 0 \\
1 & 0 & 0 & 0 & 1 & 0 & 0 \\
0 & 1 & 0 & 0 & 0 & 1 & 0 \\
0 & 0 & 0 & 1 & 0 & 0 & 1
\end{pmatrix}}.$$}

\end{Example}
\begin{Theorem} \label{Lawrence}   \cite[Theorem 3.7]{KTV} The set $\omega$ is a face of the
 strongly robust complex $\Delta _T$ if and only if $I_{\Lambda (T)_{\omega}}$ is strongly robust.
\end{Theorem}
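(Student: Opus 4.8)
The plan is to show directly that $I_{\Lambda(T)_\omega}$ is itself a $T_\omega$-robust toric ideal; once this is done, the asserted equivalence is immediate from Theorem~\ref{face}. Write $\bar\omega=[s]\setminus\omega$ and let $\mathbf{t}_i$ be the $i$-th column of $T$. First I would describe $\Ker_\ZZ(\Lambda(T)_\omega)$: its columns are indexed by $[s]$ together with $\{s+i:i\in\bar\omega\}$, and reading off the two row-blocks shows that a kernel vector $(\mathbf{v},\mathbf{w})$ is characterised by $T\mathbf{v}=\mathbf{0}$ and $w_i=-v_i$ for each $i\in\bar\omega$; no relation constrains the coordinates $v_i$ with $i\in\omega$ to the second block, since exactly those rows have been deleted. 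Thus $\mathbf{v}\mapsto(\mathbf{v},-\mathbf{v}|_{\bar\omega})$ is an isomorphism $\Ker_\ZZ(T)\to\Ker_\ZZ(\Lambda(T)_\omega)$, and the same description gives $\Ker_\ZZ(\Lambda(T)_\omega)\cap\NN^{\,2s-|\omega|}=\{\mathbf{0}\}$, so $\Lambda(T)_\omega$ does define a toric ideal.

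From this parametrization I would read off the Gale transform of $\Lambda(T)_\omega$. If $\mathbf{g}^{(1)},\dots,\mathbf{g}^{(s-r)}$ is a $\ZZ$-basis of $\Ker_\ZZ(T)$, with $r=\rank T$, then $(\mathbf{g}^{(k)},-\mathbf{g}^{(k)}|_{\bar\omega})$ is a basis of $\Ker_\ZZ(\Lambda(T)_\omega)$, so the Gale row of column $i\in[s]$ equals $G(\mathbf{t}_i)$ while the Gale row of column $s+i$, for $i\in\bar\omega$, equals $-G(\mathbf{t}_i)$. Because $I_T$ is simple and has no free bouquet (as is the case for monomial curves), the rows $G(\mathbf{t}_1),\dots,G(\mathbf{t}_s)$ are nonzero and pairwise non-proportional. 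Hence the bouquet graph of $\Lambda(T)_\omega$ has exactly $s$ connected components: for $i\in\bar\omega$ the pair $\{i,s+i\}$ is a bouquet whose two Gale rows are negatives of one another, a \emph{mixed} bouquet; for $i\in\omega$ the singleton $\{i\}$ is an isolated vertex with nonzero Gale row, a \emph{non-mixed} bouquet. In particular the index set of non-mixed bouquets is precisely $\omega$.

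It remains to identify the bouquet ideal. Computing the vectors $\cb_{B_i}$ from their definition, a singleton non-mixed bouquet $\{i\}$, $i\in\omega$, produces $\cb_{B_i}$ supported on column $i$ alone with value $+1$, while a mixed bouquet $\{i,s+i\}$, $i\in\bar\omega$, produces $\cb_{B_i}$ with $+1$ in column $i$ and $-1$ in column $s+i$ (after normalising the first nonzero entry to be positive). In both cases $\ab_{B_i}=\sum_j(\cb_{B_i})_j\,\ab_j=(\mathbf{t}_i,\mathbf{0})$: for $i\in\omega$ the lower identity-block entry of column $i$ was deleted together with the $(m+i)$-th row, and for $i\in\bar\omega$ the two identity-block contributions cancel. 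So the bouquet matrix is $\left(\begin{smallmatrix}T\\ 0\end{smallmatrix}\right)$, whose kernel equals $\Ker_\ZZ(T)$; therefore the bouquet ideal is $I_T$.

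Putting the pieces together, $I_{\Lambda(T)_\omega}$ is a $T_\omega$-robust toric ideal, so Theorem~\ref{face} gives that it is strongly robust exactly when $\omega$ is a face of $\Delta_T$, which is the claim. I expect the bouquet analysis of the middle paragraph to be the crux: one must be sure that deleting the rows and columns indexed by $\omega$ neither merges nor splits bouquets, and this is precisely where simplicity of $I_T$---the pairwise non-proportionality of the $G(\mathbf{t}_i)$---enters. The remaining steps are essentially bookkeeping with signs and with the isomorphism $D$.
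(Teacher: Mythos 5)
The paper does not actually prove this statement---it is quoted verbatim from \cite[Theorem 3.7]{KTV}---so there is no in-paper argument to compare against; your proposal supplies the natural proof. Your derivation is correct: you verify that $I_{\Lambda(T)_{\omega}}$ is itself a $T_{\omega}$-robust toric ideal (kernel isomorphic to $\Ker_{\ZZ}(T)$ via $\mathbf{v}\mapsto(\mathbf{v},-\mathbf{v}|_{\bar\omega})$, Gale rows $G(\mathbf{t}_i)$ and $-G(\mathbf{t}_i)$, hence mixed bouquets $\{i,s+i\}$ for $i\notin\omega$ and non-mixed singletons for $i\in\omega$, with $\ab_{B_i}=(\mathbf{t}_i,\mathbf{0})$ so the bouquet ideal is $I_T$), and then Theorem \ref{face} gives the equivalence. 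The one caveat is your standing assumption that no $G(\mathbf{t}_i)$ vanishes: this is automatic for monomial curves with $s\ge 3$, which is all the paper needs, but the quoted statement concerns arbitrary simple $I_T$, where a free column would require separate (easy) handling.
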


 \section{On the dimension of the strongly robust complex}
\label{dimComplex}

Suppose that $T= (n_1, n_2,n_3)$ with the property that  $\gcd{(n_1,n_2,n_3)}=1$.  For an element ${\bf u} = (u_1, u_2, u_3) \in \mathbb{N}^3 $, we define the $T$-degree of the monomial ${x}^{\bf u}$
to be $\deg_{T}({x}^{\bf u}) := u_1 n_1 + u_2 n_2 + u_3 n_3$. A vector $b$ is called a \textit{Betti $T$-degree} if $I_T$ has a minimal generating set of binomials containing an element of $T$-degree $b$. Betti $T$-degrees do not depend on the minimal set of binomial generators, \cite{CP, St}.

We know from J. Herzog \cite{Herzog}, that if $I_T$ is not complete intersection then the ideal $I_T$ is minimally generated by three binomials, while a complete intersection $I_T$ is minimally generated by two binomials.
 Let $c_i$ be the smallest multiple of $n_i$ that belongs to the semigroup generated by
$n_j,n_k$, where $\{i, j, k\}=\{1, 2, 3\}$.
Then there exist non-negative integers $c_{ij}, c_{ik}$ (not necessarily unique) such that
 $c_in_i = c_{ij}n_j + c_{ik}n_k$.
 We have the following cases.
\begin{itemize}
 \item[B3] If all six $c_{ij}\not =0$, then  $I_T$ is a  non complete intersection ideal,
  minimally generated by the three elements  $x_1^{c_1} - x_2^{c_{12}} x_3^{c_{13}}$, $x_2^{c_2}-x_1^{c_{21}}x_3^{c_{23}}$, $x_3^{c_3}-x_1^{c_{31}}x_2^{c_{32}}$.
 All minimal generators have full support, therefore the ideal is generic and thus all elements are indispensable, see \cite[Lemma 3.3, Remark 4.4]{PS2}. Being indispensable binomials implies that they have different $T$-degrees, see \cite{CKT}. Thus in the non complete intersection case we have 3 Betti $T$-degrees, $c_1n_1\not= c_2n_2\not =c_3n_3$.
 \item[B2-B1] If at least one $c_{ij}=0$, then  $I_T$ is a  complete intersection ideal
  and is generated by   $x_j^{c_j}-x_k^{c_k}$ and $x_i^{c_i} - x_j^{c_{ij}} x_k^{c_{ik}}$. There are two cases:
 \begin{itemize}
     \item[B2] the two binomials have different $T$-degrees, i.e. $c_jn_j= c_kn_k\not =c_in_i$. In this case we have two Betti $T$-degrees.
     \item[B1] the two binomials have the same $T$-degree, i.e.  $c_1n_1= c_2n_2 =c_3n_3$. In this case we have one Betti $T$-degree, see \cite{GOR}. It is easy to see that in the binomial $x_i^{c_i} - x_j^{c_{ij}} x_k^{c_{ik}}$ the monomial $x_j^{c_{ij}} x_k^{c_{ik}}$ can only be $x_j^{c_j}$ or $x_k^{c_k}$, otherwise one can find smaller multiples of $n_j$ or $n_k$ than $c_j$ or $c_k$ that belong to the semigroup generated by $n_i,n_k$ or $n_i,n_j$, contradicting the choice of $c_j$ or $c_k$. Thus both binomials are circuits. None of the circuits is indispensable since  any two of the three binomials   $x_1^{c_1}-x_2^{c_2}$, $x_1^{c_1} - x_3^{c_{3}}$  and  $x_2^{c_2} - x_3^{c_{3}}$ generate the ideal $I_T$.
 \end{itemize}
 \end{itemize}

 In both cases B1, B2, the circuit $x_j^{c_j}-x_k^{c_k}$ is the same as
 $x_j^{n_k^{\#}}-x_k^{n_j^{\#}}$, where $n_k^{\#}, n_j^{\#}$ are just the integers $n_k,n_j$
 divided by $g_{jk}= g.c.d(n_k,n_j)$. Thus $c_j=n_k^{\#}$ and $c_k=n_j^{\#}$.

\begin{Definition} \label{Def} {\em Let
     $T= (n_1, n_2,n_3)$, we say that $I_T$ is a  {\em complete intersection on $n_i$} if $c_jn_j= c_kn_k\not =c_in_i$. We say that  $I_T$ is a  {\em complete intersection on all} if $c_1n_1= c_2n_2 =c_3n_3$.}

\end{Definition}

 Remark that the condition $c_jn_j= c_kn_k\not =c_in_i$  implies that $I_T$ can be complete intersection on $n_i$ for at most one $n_i$.

\begin{Example} {\em
Let $T_1=(7, 15, 20)$, then $c_1=5, c_2=4, c_3=3$ and the Betti $T_1$-degrees are $5\cdot 7\neq 4\cdot 15=3\cdot 20$. Therefore $I_{T_1}$ is complete intersection on $n_1=7$. Let $T_2=(5, 6, 15)$, then $c_1=3, c_2=5, c_3=1$ and the Betti $T_2$-degrees are $3\cdot 5 = 1\cdot 15 \not =5\cdot 6$. Therefore $I_{T_2}$ is complete intersection on $n_2=6$. If $T_3=(6, 8, 11)$, then $c_1=4, c_2=3, c_3=2$, the Betti $T_3$-degrees are $4 \cdot 6 =3 \cdot 8 \not= 2 \cdot 11$, and therefore $I_{T_3}$ is complete intersection on $n_3=11$. Let $T_4=(6, 10, 15)$, then $c_1=5, c_2=3, c_3=2$ and there is only one Betti $T_4$-degree  $5 \cdot 6 = 3 \cdot 10 = 2 \cdot 15$. Therefore $I_{T_4}$ is complete intersection on all. Finally, for $T_5=( 3, 5, 7)$ we have $c_1=4, c_2=2, c_3=2$ and there are three Betti $T_5$-degrees: $4 \cdot 3\not = 2 \cdot 5 \not= 2 \cdot 7$. Therefore $I_{T_5}$ is not a complete intersection.
}

\end{Example}

 \begin{Proposition} \label{two indispensable}  Let
     $T= (n_1, n_2,n_3)$.
     In the toric ideal $I_T$ at least two of the three circuits are not indispensable.
 \end{Proposition}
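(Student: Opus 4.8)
The plan is to argue by the trichotomy B3/B2/B1 recalled above. Throughout I write $C_{ab}$ for the circuit with support $\{a,b\}$, whose exponent vector is ${\bf u}_{ab}=(n_b/g_{ab})\,{\bf e}_a-(n_a/g_{ab})\,{\bf e}_b$ with $g_{ab}=\gcd(n_a,n_b)$ and whose $T$-degree is $\lcm(n_a,n_b)$; recall that an indispensable binomial is in particular a minimal generator of $I_T$ and that, equivalently, its exponent vector admits no proper semiconformal decomposition.

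In case B3, $I_T$ is not a complete intersection, hence generic, and every minimal generator has full support. Since an indispensable binomial must be a minimal generator while each $C_{ab}$ has two-element support, no circuit is indispensable, so all three fail. In case B1, the three circuits $C_{12},C_{13},C_{23}$ all carry the unique Betti $T$-degree and, as already observed, any two of them generate $I_T$; thus each can be omitted from a minimal generating set and none is indispensable. In either case at least two (in fact all three) circuits are not indispensable, so it remains to treat B2.

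Suppose $I_T$ is a complete intersection on $n_i$, so that the two Betti $T$-degrees are $b_1=c_jn_j=c_kn_k$ and $b_2=c_in_i$ with $b_1\ne b_2$. I will show that the two circuits $C_{ij},C_{ik}$ meeting the distinguished index $i$ are not indispensable. The crux is the following representation lemma: $b_2$ can be written as $c_in_i=c_{ij}n_j+c_{ik}n_k$ with $c_{ij}\ge 1$ (and, symmetrically, with $c_{ik}\ge 1$). I would prove it by contradiction. If no representation of $b_2$ in the semigroup $\langle n_j,n_k\rangle$ involved $n_j$, then every one would be a pure multiple of $n_k$, so $b_2=c_{ik}n_k$ is a common multiple of $n_i$ and $n_k$ and hence $b_2\ge\lcm(n_i,n_k)$; but $\lcm(n_i,n_k)$ is itself a multiple of $n_i$ lying in $\langle n_k\rangle\subseteq\langle n_j,n_k\rangle$, so by minimality of $c_i$ we also have $b_2=c_in_i\le\lcm(n_i,n_k)$, forcing $b_2=\lcm(n_i,n_k)$. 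Then $b_2$ is a multiple of $n_k$ lying in $\langle n_i\rangle\subseteq\langle n_i,n_j\rangle$, whence $b_2\ge c_kn_k=b_1$; since $b_1\ne b_2$ we get $b_1<b_2$, and subtracting gives $b_2=b_1+(b_2-b_1)=c_jn_j+(c_{ik}-c_k)n_k$ with $c_j\ge 1$ and $c_{ik}-c_k\ge 0$, a representation that does use $n_j$ --- the desired contradiction.

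Granting the lemma, set ${\bf g}=c_i{\bf e}_i-c_{ij}{\bf e}_j-c_{ik}{\bf e}_k$, the exponent vector of the degree-$b_2$ binomial $x_i^{c_i}-x_j^{c_{ij}}x_k^{c_{ik}}\in I_T$ with $c_{ij}\ge 1$. I would then verify that ${\bf u}_{ik}={\bf g}+_{sc}({\bf u}_{ik}-{\bf g})$ is a proper semiconformal decomposition: coordinate $i$ is fine because $c_i\le n_k/g_{ik}$ (as $\lcm(n_i,n_k)$ is a multiple of $n_i$ in $\langle n_j,n_k\rangle$), coordinate $j$ is harmless since ${\bf g}_j=-c_{ij}\le 0$ and $({\bf u}_{ik}-{\bf g})_j=c_{ij}\ge 0$, and coordinate $k$ is immediate; properness holds exactly because $c_{ij}\ge 1$ guarantees ${\bf g}\ne{\bf u}_{ik}$. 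Hence ${\bf u}_{ik}$ has a proper semiconformal decomposition and $C_{ik}$ is not indispensable; using instead a representation with $c_{ik}\ge 1$ gives the same conclusion for $C_{ij}$. Thus $C_{ij}$ and $C_{ik}$ are both non-indispensable, which is the required pair. I expect the only real obstacle to be the boundary case $\lcm(n_i,n_k)=b_2$, where the degree-$b_2$ generator may be taken to be the circuit $C_{ik}$ itself and the naive decomposition degenerates; it is precisely the representation lemma, producing an expression of $b_2$ that genuinely involves $n_j$, that resolves this case.
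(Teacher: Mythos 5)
Your proof is correct, and in the complete intersection case it takes a genuinely different route from the paper's. Cases B3 and B1 are handled exactly as in the paper (genericity forces full-support minimal generators in B3; in B1 any two of the three circuits generate, so none is indispensable). The divergence is in case B2: the paper argues entirely through membership in minimal generating sets --- a complete intersection has a two-element minimal binomial generating set, so if only one of its members is a circuit the other two circuits cannot lie in every minimal generating set, and if both members are circuits it invokes \cite[Theorem 3.4]{CKT} on minimal binomial $T$-degrees (for $a<c$) or the one-Betti-degree degeneration (for $a=c$). You instead exhibit explicit proper semiconformal decompositions of ${\bf u}_{ij}$ and ${\bf u}_{ik}$, built from a representation $c_in_i=c_{ij}n_j+c_{ik}n_k$ with $c_{ij}\ge 1$ (resp.\ $c_{ik}\ge 1$); your representation lemma, which forces such a mixed representation to exist by playing the minimality of $c_i$ against $\lcm(n_i,n_k)$ and the equality $c_jn_j=c_kn_k$, is the new ingredient and correctly disposes of the boundary case where the degree-$c_in_i$ generator is itself a circuit. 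I checked the semiconformality coordinate by coordinate (in particular $c_i\le n_k/g_{ik}$ at coordinate $i$) and the properness via $c_{ij}\ge 1$; all steps hold. What your approach buys is self-containedness --- it avoids the citation of \cite{CKT} and the sub-case split on whether one or both minimal generators are circuits --- at the cost of being longer; it also has the side benefit of identifying concretely \emph{which} two circuits fail to be indispensable (the two through the distinguished index $i$), which is consistent with, and foreshadows, Lemma \ref{circuit-ind}. The paper's argument is shorter but leans on external structural results.
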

 \begin{proof} In the case that $I_T$  is not complete intersection the toric ideal     is generated by three  binomials of full support \cite[Section 3]{Herzog}. Then the toric ideal is generic and by \cite[Lemma 3.3, Remark 4.4]{PS2} all three generators are indispensable. Since all generators have full support
none of them is a circuit.  Therefore none of the three circuits of the  toric ideal $I_T$ is indispensable. In the complete intersection case, the ideal $I_T$ has two minimal generators, one of which is always a circuit \cite[Proposition 3.5 and Theorem 3.8]{Herzog}.
If exactly one minimal generator was a circuit, then the other two circuits would not be indispensable. If both of the minimal generators were circuits then without loss of generality we can assume they would be of the form $x_i^a-x_j^b$, $x_i^c-x_k^d$. Namely, two of the monomials will be powers of the same variable. We can distinguish two cases.
\begin{itemize}
    \item Firstly, the two exponents $a$, $c$ could be different, so assume that $a < c$. In this case, according to \cite[Theorem 3.4]{CKT}, the binomial $x_i^c-x_k^d$ would not be indispensable, as $c$ would not be a minimal binomial $T$-degree. Therefore, in this case there exists at most one indispensable circuit, and since we have three circuits in total at least two would not be indispensable.
    \item In the second case, the two exponents $a, c$ are equal. Then both generators $x_i^a-x_j^b$ and $x_i^a-x_k^d$ would be of the same Betti $T$-degree. The ideal is generated by any two of the following three circuits $x_i^a-x_j^b$, $x_i^a-x_k^d$, $x_j^d-x_k^b$, since  $x_j^d-x_k^b=(x_i^a-x_j^b)-(x_i^a-x_k^d)$. Therefore, there is no indispensable binomial and in particular none of the circuits is indispensable. \qed
\end{itemize}
\end{proof}

\begin{Theorem} \label{dimension}
 Let $T=(n_1, n_2,  \dots ,n_s)$ then $\dim(\Delta _T)\leq 0.$
\end{Theorem}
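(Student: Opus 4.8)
The plan is to exploit that $\Delta_T$ is a simplicial complex: since the dimension of a face $F$ is $|F|-1$ and faces are closed under taking subsets, $\dim\Delta_T\le 0$ holds if and only if no two--element set $\{i,j\}$ is a face. So I would fix an arbitrary pair $\{i,j\}\subseteq[s]$ and aim to show $\{i,j\}\notin\Delta_T$.

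First I would record a membership criterion for $\Delta_T$ phrased entirely inside $\Ker_\ZZ(T)$. Using Theorem~\ref{Lawrence} one computes $\Ker_\ZZ(\Lambda(T)_\omega)$ explicitly: it is isomorphic to $\Ker_\ZZ(T)$ via ${\bf u}\mapsto({\bf u};-{\bf u}|_{[s]\setminus\omega})$, where each coordinate $i\in[s]\setminus\omega$ occurs twice with opposite signs and each $i\in\omega$ occurs once. Translating the ``no proper semiconformal decomposition'' description of indispensability through this isomorphism shows that $\omega$ is a face of $\Delta_T$ if and only if no ${\bf u}\in\Gr(T)$ has a proper decomposition ${\bf u}={\bf u}'+{\bf u}''$ in $\Ker_\ZZ(T)$ that is conformal at every coordinate of $[s]\setminus\omega$ and semiconformal at the coordinates of $\omega$. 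With this criterion in hand I would reduce to $s=3$: choosing any third index $k$ and the sub--curve $T'=(n_i,n_j,n_k)$, a vector supported on $\{i,j,k\}$ lies in $\Ker_\ZZ(T)$ exactly when it lies in $\Ker_\ZZ(T')$, so a Graver element of $T'$ is a Graver element of $T$, and conformality at the zero coordinates outside $\{i,j,k\}$ forces any witnessing decomposition to stay supported there. Hence $\{i,j\}\in\Delta_T$ would imply $\{i,j\}\in\Delta_{T'}$, and it suffices to prove $\{i,j\}\notin\Delta_{T'}$, i.e. the case where $[s]\setminus\omega=\{k\}$ is a single coordinate.

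The heart of the argument is then an observation about circuits of the three--variable curve $T'$. If $C$ is the circuit supported on $\{i,k\}$, I claim every proper semiconformal decomposition $C={\bf u}'+_{sc}{\bf u}''$ in $\Ker_\ZZ(T')$ is automatically conformal at $k$. Indeed, semiconformality forces $u''_i\ge 0$ and $u''_j\ge 0$; since ${\bf u}''\ne 0$ we get $n_iu''_i+n_ju''_j>0$, hence $u''_k<0$, while semiconformality at the negative coordinate $k$ already gives $u'_k\le 0$, so both summands are nonpositive at $k$. The identical reasoning applies to the circuit supported on $\{j,k\}$. Consequently, if either circuit through $k$ is non-indispensable in $I_{T'}$, its proper semiconformal decomposition is conformal at $k$ and semiconformal at $\{i,j\}$, so by the criterion above it certifies $\{i,j\}\notin\Delta_{T'}$.

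Finally I would invoke Proposition~\ref{two indispensable}: at least two of the three circuits of $I_{T'}$ are non-indispensable, so at most one is indispensable, and therefore at least one of the two circuits supported on $\{i,k\}$ and on $\{j,k\}$ is non-indispensable. By the previous paragraph this yields the required decomposition, giving $\{i,j\}\notin\Delta_{T'}$ and hence $\{i,j\}\notin\Delta_T$. As $\{i,j\}$ was arbitrary, $\Delta_T$ contains no edge and $\dim\Delta_T\le 0$. I expect the main obstacle to be the first step, namely establishing the conformal/semiconformal membership criterion from Theorem~\ref{Lawrence} by correctly computing indispensability in the partial Lawrence lifting $\Lambda(T)_\omega$; once that is in place, the circuit computation and Proposition~\ref{two indispensable} close the argument immediately.
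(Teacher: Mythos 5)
Your proposal is correct and follows essentially the same route as the paper: both arguments pick a third index $k$, use Proposition~\ref{two indispensable} to find a non-indispensable circuit of $I_{(n_i,n_j,n_k)}$ supported on $\{i,k\}$ or $\{j,k\}$, and then observe via the same sign analysis that its proper semiconformal decomposition is automatically conformal at $k$, hence lifts to a proper semiconformal decomposition of a circuit in $\Ker_{\ZZ}\bigl(\Lambda(T)_{\{i,j\}}\bigr)$, contradicting strong robustness. The only difference is presentational: you package the lifting step as a general membership criterion for $\Delta_T$ (conformal outside $\omega$, semiconformal on $\omega$) together with an explicit reduction to $s=3$, whereas the paper writes out the lifted vectors in $\Ker_{\ZZ}\bigl(\Lambda(T)_{\{i,j\}}\bigr)$ and checks semiconformality by hand.
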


\begin{proof}

Suppose on the contrary that $\dim(\Delta _T)> 0.$ Then there exist $i, j$ such that the edge $\{i,j\}$ belongs to the strongly robust complex $\Delta _T$. Therefore, the toric ideal $I_{\Lambda (T)_{\{i,j\}}}$
is strongly robust by Theorem~\ref{Lawrence}. Consider the ideal  $I_{(n_i, n_j, n_k)}$ for any $k \not = i, j$.
Let ${\bf c}$ be a non indispensable circuit of the toric ideal of $I_{(n_i, n_j, n_k)}$  different
from $(n_j^{'}, -n_i^{'},0)$, where $n_i^{'}, n_j^{'}$ are the $n_i, n_j$ divided by their greatest common divisor. Then without loss of generality ${\bf c}$ will be in the form ${\bf c}=(n_k^*,0, -n_i^*)$, where $n_i^*, n_k^*$ are the $n_i, n_k$ divided by their greatest common divisor.
We know that there always exists such a circuit ${\bf c}$, since by the Proposition \ref{two indispensable} we have that at least two of the three circuits of $I_{(n_i, n_j, n_k)}$ are not indispensable. As the circuit ${\bf c}$
is not indispensable, it has a proper semiconformal decomposition into two vectors with the following pattern of signs $(n_k^*,0, -n_i^*)=( \ast , - , \ominus )+_{sc}( \oplus , + , \ast )$.   The first $\ast$ is a positive number and the second $\ast$ is a negative number, since $\Ker_{\ZZ}(n_i,n_j,n_k)\cap \NN^3=\{{\mathbf{0}}\}$. Namely, $(n_k^*,0, -n_i^*)=( a , -b , -c )+_{sc}( d , b , -e )$, where $a,b,c,d,e\in \NN$ and $abe \not =0$, so this is a proper semiconformal decomposition. We will show below that this lifts into a proper semiconformal decomposition in $I_{\Lambda (T)_{\{i,j\}}}$.  Indeed, in the toric ideal  $I_{\Lambda (T)_{\{i,j\}}}$ we have
$$( 0,\ldots,n_k^*,\ldots,0,\ldots,-n_i^*,\ldots,n_i^*,\ldots,0)=$$ $$(0,\ldots,a,\ldots,-b,\ldots,-c,\ldots,c,\ldots,0)+_{sc}(0,\ldots,d,\ldots,b,\ldots,-e,\ldots,e,\ldots,0), $$
where the only nonzero components are in the $i^{th}$,$k^{th}$ and $(s+k-2)^{th}$ positions in the first vector and in the $i^{th}$, $j^{th}$, $k^{th}$ and $(s+k-2)^{th}$ positions in the last two. This decomposition is proper semiconformal, since $(n_k^*,0, -n_i^*)=( a , -b , -c )+_{sc}( d , b , -e )$ is proper.
We observe that the element $( 0,\ldots,n_k^*,\ldots,0,\ldots,-n_i^*,\ldots,n_i^*,\ldots,0)=D((0,\ldots,n_k^*,\ldots,0,\ldots,-n_i^*,\ldots,0))$ is a circuit of $I_{\Lambda (T)_{\{i,j\}}}$, since $D$ maps circuits to circuits \cite[Theorem 1.11]{PTV}, and is not an indispensable element in $I_{\Lambda (T)_{\{i,j\}}}$, since it admits a proper semiconformal decomposition.
However, we know that circuits are always contained in the Graver basis \cite[Proposition 4.11]{St}, so this means that the Graver basis $\Gr\left(\Lambda (T)_{\{i,j\}}\right)$ is not equal to the set of indispensable elements $S\left(\Lambda (T)_{\{i,j\}}\right)$. Therefore, the toric ideal $I_{\Lambda (T)_{\{i,j\}}}$
is not strongly robust, a contradiction. We conclude that $\dim(\Delta _T)\leq 0.$ \qed

\end{proof}

  In \cite[Corollary 1.3]{S}, Sullivant proved that strongly robust codimension 2 toric ideals
 have at least 2 mixed bouquets. For the strongly robust complex, this result means that $\dim(\Delta _T) < s-2.$ In the same paper, Sullivant poses a stronger question which can be translated to the following question: for every simple codimension $r$ toric ideal $I_T$, is it true that $\dim(\Delta _T)< s-r$? Theorem \ref{dimension} proves that the answer to this question is affirmative for
 the simple toric ideals of monomial curves. Note that toric ideals of monomial curves have codimension $r=s-1$.

\section{Complete Intersection and strongly robustness}
\label{section:CI_robustness}

\begin{Proposition} \label{conformal} Let $T=(n_1, n_2,  \dots ,n_s)$ and  ${\bf u},{\bf v},{\bf w}\in\Ker_{\mathbb{Z}}(T)$. If  $D({\bf u})=D({\bf v})+_{sc}D({\bf w})$ in $\Ker_{\mathbb{Z}}\left(\Lambda (T)_{\{i\}}\right)$, then $[{\bf u}]^i=[{\bf v}]^i+_{c}[{\bf w}]^i$, where $[{\bf u}]^i$ is the vector obtained from ${\bf u}$ by deleting its $i^{th}$ component.
\end{Proposition}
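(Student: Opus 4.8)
The plan is to make the isomorphism $D$ completely explicit for the matrix $\Lambda(T)_{\{i\}}$ and then verify the conclusion one coordinate at a time. First I would compute the kernel directly. Writing an element of $\Ker_{\ZZ}\!\left(\Lambda(T)_{\{i\}}\right)$ as $(a_1,\dots,a_s,(b_k)_{k\neq i})$, the row coming from $T$ yields $\sum_k n_k a_k=0$, while each surviving row of the identity block gives $a_k+b_k=0$ for $k\neq i$; the row and column carrying the index $i$ in the second block have been deleted, so $a_i$ is unconstrained in the second block and no variable $b_i$ exists. Hence $b_k=-a_k$ for every $k\neq i$, and the linear isomorphism $D$ of \cite[Theorem 1.9]{PTV} takes the explicit form $D({\bf u})=(u_1,\dots,u_s,(-u_k)_{k\neq i})$: its first block reproduces ${\bf u}$ and its second block records the negatives of the coordinates $u_k$ with $k\neq i$. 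In particular the unique singleton (non-mixed) bouquet is the one indexed by $i$, which is the source of the special role played by that index. Since $D$ is linear, the additive identity ${\bf u}={\bf v}+{\bf w}$ is automatic from $D({\bf u})=D({\bf v})+D({\bf w})$, and it remains only to check the sign conditions.

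Next I would translate the hypothesis into sign constraints. Semiconformality at a coordinate $\ell$ is, by definition, the requirement that $D({\bf v})_\ell>0$ forces $D({\bf w})_\ell\geq 0$; equivalently it forbids the simultaneous occurrence of $D({\bf v})_\ell>0$ and $D({\bf w})_\ell<0$. Reading this off the first block at index $k$, where $D({\bf v})_k=v_k$ and $D({\bf w})_k=w_k$, rules out $v_k>0$ together with $w_k<0$. Reading it off the second-block coordinate attached to $k\neq i$, where the entries are $-v_k$ and $-w_k$, rules out $-v_k>0$ together with $-w_k<0$, that is, it rules out $v_k<0$ together with $w_k>0$.

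Finally I would combine the two constraints. For each $k\neq i$ both forbidden sign patterns are excluded, so $v_k$ and $w_k$ can never be of strictly opposite sign; equivalently they lie in a common closed half-line $\{t\geq 0\}$ or $\{t\leq 0\}$, which is exactly conformality of ${\bf v}$ and ${\bf w}$ at the coordinate $k$. As this holds for every index surviving in $[\,\cdot\,]^i$, I obtain $[{\bf u}]^i=[{\bf v}]^i+_{c}[{\bf w}]^i$.

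The one step I would write out with care is the explicit identification of $D$, and in particular the sign reversal in the second Lawrence block, because the entire argument rests on it: each mixed bouquet $k\neq i$ contributes one sign constraint through its first-block copy and the opposite constraint through its second-block copy, so the two together pin down conformality, whereas the singleton bouquet $i$ supplies only the first of these two constraints. This asymmetry is precisely why the $i$-th coordinate must be deleted in the conclusion and why the statement cannot be upgraded to a conformal decomposition of the full vectors ${\bf u},{\bf v},{\bf w}$.
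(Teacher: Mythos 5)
Your proof is correct and follows essentially the same route as the paper's: both arguments rest on the observation that for each $j\neq i$ the vector $D(\mathbf{u})$ carries both $u_j$ and $-u_j$ as coordinates, so semiconformality at those two positions forbids $v_j$ and $w_j$ from having strictly opposite signs, which is exactly conformality at $j$. The only cosmetic difference is that you derive the explicit form of $D$ from the kernel equations of $\Lambda(T)_{\{i\}}$, whereas the paper reads the pair $(u_j,-u_j)$ off the bouquet construction and then runs a two-case sign analysis on $u_j\geq 0$ versus $u_j\leq 0$.
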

\begin{proof}
    Let $j \in \{1, \ldots,s\}$ be such that $j\not = i$. Then, for the vector $D({\bf u})$ in the kernel $\Ker_{\mathbb{Z}}\left(\Lambda (T)_{\{i\}}\right)$, one of the components is equal to $u_j$ and another is $-u_j$. Similarly, the corresponding two components of $D({\bf v}), D({\bf w}) \in \Ker_{\mathbb{Z}}\left(\Lambda (T)_{\{i\}}\right)$ are
    $v_j, -v_j$ and $w_j, -w_j$ respectively. The semiconformal decomposition $D({\bf u})=D({\bf v})+_{sc}D({\bf w})$, implies that  on those components we have
   \begin{eqnarray}\label{sc_1}
 (u_j) & = & (v_j)+_{sc} (w_j), \hspace{0.5cm} \\
\label{sc_2}
   (-u_j) & = & (-v_j)+_{sc} (-w_j). \hspace{0.5cm}
  \end{eqnarray}

    If $u_j\geq 0$, then $w_j\geq 0$ by (\ref{sc_1}), while $-v_j\leq 0$ by (\ref{sc_2}). Therefore, both $v_j, w_j$ are non-negative and so the sum $(u_j)=(v_j)+_{c} (w_j)$ is conformal. If on the other hand $u_j\leq 0$, then $v_j\leq 0$ by (\ref{sc_1}) and $-w_j\geq 0$ by (\ref{sc_2}). Therefore, both $v_j, w_j$ are non-positive and the sum $(u_j)=(v_j)+_{c} (w_j)$
    is again conformal.  \qed

\end{proof}
\begin{Lemma} \label{Lemma}
Any circuit of $\Lambda (T)_{\{i\}}$ that has $i$ in its support is  indispensable.
\end{Lemma}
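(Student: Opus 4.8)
The plan is to transport any hypothetical decomposition down to the two–variable toric ideal by means of Proposition~\ref{conformal}, where it can be excluded by a short sign analysis. I first describe the circuits in question. Since $D\colon \Ker_{\ZZ}(T)\to\Ker_{\ZZ}(\Lambda(T)_{\{i\}})$ is an isomorphism sending circuits to circuits (\cite[Theorem 1.11]{PTV}) and $\supp(D({\bf u}))$ depends monotonically on $\supp({\bf u})$, every circuit of $\Lambda(T)_{\{i\}}$ is, up to sign, $D({\bf c})$ for a circuit ${\bf c}$ of $T$; as $T$ defines a monomial curve, ${\bf c}$ is supported on a pair. From the bouquet structure of $\Lambda(T)_{\{i\}}$ (the bouquet $\{j,s+j\}$ is mixed for $j\neq i$, while $\{i\}$ is a singleton) one checks that $D({\bf u})=(u_1,\dots,u_s,-u_1,\dots,\widehat{-u_i},\dots,-u_s)$, so the $i$-th coordinate of $D({\bf u})$ equals $u_i$. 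Hence the circuit contains $i$ in its support precisely when $i\in\supp({\bf c})$, i.e.\ ${\bf c}$ is supported on a pair $\{i,k\}$. Writing $n_i',n_k'$ for $n_i,n_k$ divided by $\gcd(n_i,n_k)$, we may take $c_i=n_k'$, $c_k=-n_i'$; then $D({\bf c})$ has exactly three nonzero entries, namely $n_k'$ in the coordinate of column $i$, $-n_i'$ in the coordinate of column $k$, and $n_i'$ in the coordinate duplicating column $k$. It thus remains to prove that $D({\bf c})$ is indispensable.

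Suppose for contradiction that $D({\bf c})$ admits a proper semiconformal decomposition. Through the isomorphism $D$ it takes the form $D({\bf c})=D({\bf v})+_{sc}D({\bf w})$ with ${\bf v},{\bf w}\in\Ker_{\ZZ}(T)$ nonzero and ${\bf v}+{\bf w}={\bf c}$. Proposition~\ref{conformal} then produces a conformal decomposition $[{\bf c}]^i=[{\bf v}]^i+_{c}[{\bf w}]^i$. Since ${\bf c}$ is supported on $\{i,k\}$, the vector $[{\bf c}]^i$ has a single nonzero entry, in the position of $k$; conformality forbids sign cancellation, so $[{\bf v}]^i$ and $[{\bf w}]^i$ must vanish outside that position. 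Consequently ${\bf v}$ and ${\bf w}$ are supported on $\{i,k\}$, and being elements of $\Ker_{\ZZ}(T)$ they are rational multiples of ${\bf c}$; because $\gcd(n_i',n_k')=1$ these multiples are integral, say ${\bf v}=\lambda{\bf c}$ and ${\bf w}=(1-\lambda){\bf c}$ with $\lambda\in\ZZ$.

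Finally I would rule out every integer $\lambda$ giving a proper decomposition by testing the semiconformal sign patterns on $D({\bf c})=\lambda D({\bf c})+_{sc}(1-\lambda)D({\bf c})$ at the two coordinates where it is nonzero. At the coordinate of column $i$ one has $D({\bf c})_i=n_k'>0$, and the pattern $+=*+_{sc}\oplus$ forces the second summand $(1-\lambda)n_k'$ to be nonnegative, i.e.\ $\lambda\le 1$. At the coordinate of column $k$ one has $D({\bf c})_k=-n_i'<0$, and the pattern $-=\ominus+_{sc}*$ forces the first summand $-\lambda n_i'$ to be nonpositive, i.e.\ $\lambda\ge 0$. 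Hence $\lambda\in\{0,1\}$, so one of ${\bf v},{\bf w}$ is zero and the decomposition is improper, a contradiction. Therefore $D({\bf c})$ admits no proper semiconformal decomposition and is indispensable.

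The step I expect to demand the most care is the first one: verifying that a circuit meeting coordinate $i$ is forced, up to sign, to be the image of a two–variable circuit $(n_k',-n_i')$ of $T$, and reading off the three nonzero entries of $D({\bf c})$ correctly from the bouquet structure of $\Lambda(T)_{\{i\}}$. Once Proposition~\ref{conformal} moves the problem to $T$, the remaining sign bookkeeping is routine.
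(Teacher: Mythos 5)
Your proof is correct and follows essentially the same route as the paper's: both reduce a hypothetical semiconformal decomposition $D(\mathbf{c})=D(\mathbf{v})+_{sc}D(\mathbf{w})$ via Proposition~\ref{conformal} to a conformal identity on $[\cdot]^i$, conclude that $\mathbf{v},\mathbf{w}$ are supported in $\{i,k\}$ and hence are integer multiples of the circuit, and then force one summand to vanish. The only cosmetic difference is that you parametrize as $\lambda\mathbf{c}+(1-\lambda)\mathbf{c}$ and finish with the semiconformal sign patterns, whereas the paper writes $\mathbf{v}=l\mathbf{u}$ and compares $n_i^{\#}\geq l\,n_i^{\#}$.
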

\begin{proof} Due to the one-to-one correspondence between circuits of $\Lambda (T)_{\{i\}}$ and circuits of $T$, a circuit of $\Lambda (T)_{\{i\}}$ can be written as $D({\bf u})$, where ${\bf u}$ is a circuit of $T$, see \cite[Theorem 1.11]{PTV}. Therefore, $D({\bf u})$ has the form
 $D(0, \ldots, 0,n_i^{\#}, 0, \ldots, 0,-n_j^{\#}, 0, \ldots, 0)$, where $n_i^{\#}, n_j^{\#}$ are the $n_i, n_j$ divided by their greatest common divisor,  $n_i^{\#}$ is the $j^{th}$ component of ${\bf u}$ and $-n_j^{\#}$ is the $i^{th}$ component of ${\bf u}$.
 Let $D({\bf u})=D({\bf v})+_{sc}D({\bf w})$ be a semi-conformal decomposition of $D({\bf u})$. Then, by Proposition \ref{conformal}, we have that $[{\bf u}]^i=[{\bf v}]^i+_{c}[{\bf w}]^i$.
As the only conformal decomposition of $0$ is $0+0$, the only possible non zero component
of each of the three vectors $[{\bf u}]^i, [{\bf v}]^i, [{\bf w}]^i$ is the $j^{th}$ and we have that
$n_i^{\#}= v_j+_cw_j$. Thus, both $v_j, w_j$ are non negative and at least one is positive.
Without loss of generality we can assume that $v_j >0$, so ${\bf v}$ is not zero and  $[{\bf v}]^i$
has only one element $v_j$ in its support. This means that ${\bf v}$ has minimal support $\{i, j\}$
and so it is a  multiple of the circuit ${\bf u}$. Therefore, ${\bf v}=l{\bf u}$ and $l \geq 1$,
since $v_j$ is positive. Thus, $n_i^{\#}= v_j+w_j \geq v_j = l n_i^{\#}$, therefore $l=1$, ${\bf v}={\bf u}$ and for ${\bf w}$ the only option is that ${\bf w}={\bf 0}$, thus $D({\bf w})={\bf 0}$, leading  to a non-proper decomposition. This proves that $D({\bf u})$
is indispensable. \qed

\end{proof}

\begin{Lemma} \label{circuit-ind} Let $T=(n_1, n_2,  \dots ,n_s)$ and ${\bf u}$ be the circuit with support on $j,k\in [s]$. Then the circuit $D({\bf u})$ is indispensable in   $\Lambda (T)_{\{i\}}$ if and only if $I_{(n_i, n_j, n_k)} $ is a complete intersection on $n_i$.
\end{Lemma}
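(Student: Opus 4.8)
The plan is to reduce the statement to a purely numerical condition on the three-element curve $(n_i,n_j,n_k)$ and then match that condition against the $B1$--$B3$ analysis of the Betti $T$-degrees. Write the circuit as ${\bf u}=n_k^{\#}e_j-n_j^{\#}e_k$, so that $D({\bf u})$ carries the entries $n_k^{\#},-n_j^{\#}$ in the $j$th and $k$th $x$-slots and $-n_k^{\#},n_j^{\#}$ in the surviving $y$-slots. Since $D$ is an isomorphism $\Ker_{\ZZ}(T)\to\Ker_{\ZZ}(\Lambda(T)_{\{i\}})$, every decomposition of $D({\bf u})$ has the shape $D({\bf v})+_{sc}D({\bf w})$, and Proposition \ref{conformal} makes it conformal in every coordinate $\ne i$. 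As $[{\bf u}]^i$ is supported on $\{j,k\}$, conformality forces $\supp({\bf v}),\supp({\bf w})\subseteq\{i,j,k\}$, so I may assume $s=3$ and work in $T'=(n_i,n_j,n_k)$. The $x_i$-slot (whose $y$-partner was deleted) contributes only the one-sided relation $0=v_i+_{sc}w_i$, i.e. $v_i\le 0\le w_i$; and since a circuit lies in the Graver basis it has no proper conformal decomposition, so $v_i=w_i=0$ is excluded, forcing $v_i<0<w_i$.

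Collecting these constraints, I would show that $D({\bf u})$ fails to be indispensable exactly when there is a vector ${\bf w}\in\Ker_{\ZZ}(T')$ with $w_i\ge 1$, $0\le w_j\le n_k^{\#}$ and $-n_j^{\#}\le w_k\le 0$; equivalently, a relation $w_in_i+w_jn_j=|w_k|\,n_k$ with $w_i\ge 1$, $0\le w_j\le n_k^{\#}$ and $1\le|w_k|\le n_j^{\#}$ (one sets ${\bf v}={\bf u}-{\bf w}$, whose signs are then automatic). The lemma thus becomes the assertion that such a ``split through $n_i$'' of the circuit relation $n_k^{\#}n_j=n_j^{\#}n_k=\lcm(n_j,n_k)=:\rho$ exists if and only if $I_{(n_i,n_j,n_k)}$ is \emph{not} a complete intersection on $n_i$.

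For the direction ``not CI on $n_i$ $\Rightarrow$ a split exists'' I would build ${\bf w}$ explicitly from the data $c_i,c_j,c_k$. Let $c_k$ be least with $c_kn_k\in\langle n_i,n_j\rangle$. If some representation $c_kn_k=an_i+bn_j$ uses $n_i$ (that is $a\ge 1$), then $(a,b,-c_k)$ already works, because $b\le n_k^{\#}$ and $c_k\le n_j^{\#}$ hold automatically. Otherwise $c_kn_k$ is a pure $n_j$-power, forcing $c_kn_k=\rho$ and $c_k=n_j^{\#}$; if in addition $c_jn_j<\rho$, then writing $c_jn_j=a'n_i+b'n_k$ with $a'\ge 1$ and $b'<n_j^{\#}$ gives the admissible split ${\bf w}=(a',\,n_k^{\#}-c_j,\,-(n_j^{\#}-b'))$. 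The only remaining possibility is $c_jn_j=c_kn_k=\rho$ with $\rho$ having no representation using $n_i$; but then $c_in_i\ne\rho$ (a value $c_in_i=\rho$ would itself be such a representation), so $c_jn_j=c_kn_k\ne c_in_i$, i.e. we are exactly in the CI-on-$n_i$ case. Reading this chain backwards proves the contrapositive as well.

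The hardest point is the converse ``CI on $n_i$ $\Rightarrow$ no split'', and I expect it to be the main obstacle. A split would give $|w_k|n_k\in\langle n_i,n_j\rangle$ with $|w_k|\le n_j^{\#}$, hence $|w_k|=c_k=n_j^{\#}$ and $\rho=w_in_i+w_jn_j$ with $w_i\ge 1$: a representation of $\rho$ using $n_i$. Everything therefore reduces to the key numerical fact that \emph{if $I_{(n_i,n_j,n_k)}$ is a complete intersection on $n_i$, then $\rho=\lcm(n_j,n_k)$ has no representation $an_i+bn_j$ with $a\ge 1$}. I would prove this by contradiction: such a representation produces a monomial $x_i^ax_j^b$ of $T$-degree $\rho$, and since $b<n_k^{\#}=c_j$, cancelling $x_j^b$ in the prime ideal $I_{T'}$ yields $x_j^{\,n_k^{\#}-b}-x_i^{a}\in I_{T'}$; minimality of $c_j=n_k^{\#}$ then forces $b=0$, i.e. $n_i\mid\rho$. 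Writing $c_in_i=pn_j+qn_k$ with $p<n_k^{\#}$, $q<n_j^{\#}$ and using $x_i^{\rho/n_i}\equiv x_j^{n_k^{\#}}\equiv x_k^{n_j^{\#}}$, the same cancellation applied on each side forces $p=q=0$, whence $c_in_i=0$, a contradiction. The delicate part throughout is keeping the exponent bounds $p<n_k^{\#}$, $q<n_j^{\#}$ under control so that the factorisations are legitimate, and exploiting $c_j=n_k^{\#}$, $c_k=n_j^{\#}$, which hold precisely because the circuit on $\{j,k\}$ is a minimal generator in the complete intersection on $n_i$.
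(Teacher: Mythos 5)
Your argument is correct in outline, but it reaches the lemma by a genuinely different route than the paper, so let me compare. Your reduction of indispensability of $D({\bf u})$ to the existence of a ``split'' $an_i+bn_j=cn_k$ with $a\ge 1$, $0\le b\le n_k^{\#}$, $1\le c\le n_j^{\#}$ is sound (Proposition \ref{conformal} plus the fact that $D({\bf u})$ lies in the Graver basis forces $v_i<0<w_i$ and conformality elsewhere, exactly as you say), and it turns the lemma into a statement about the numerical semigroup alone. The paper instead argues directly on a putative semiconformal decomposition: for the direction ``complete intersection on $n_i$ $\Rightarrow$ indispensable'' it shows the decomposition forces $c_jn_j=en_i=c_kn_k$ and then, via a divisibility analysis ($n_i=\lambda c_jc_k$, $n_j=\lambda ec_k$, $n_k=\lambda ec_j$), that $e=c_i$, i.e.\ the ideal would be a complete intersection on all --- a contradiction; your route replaces this with the single arithmetic fact that $\lcm(n_j,n_k)$ admits no representation $an_i+bn_j$ with $a\ge1$, proved by cancellation against the minimality of $c_j=n_k^{\#}$ and $c_k=n_j^{\#}$. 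For the converse the paper splits into ``complete intersection on all'' (an explicit decomposition) and ``not a complete intersection'' (where it quotes that no circuit of the three-variable ideal is indispensable and lifts that decomposition), whereas you build the split uniformly and explicitly from representations of $c_jn_j$ and $c_kn_k$; this is more self-contained but costs you the three-way case analysis. The one place you should tighten is the step ``writing $c_in_i=pn_j+qn_k$ with $p<n_k^{\#}$, $q<n_j^{\#}$'': these two bounds cannot in general be imposed simultaneously on an arbitrary representation, since trading $n_j^{\#}n_k$ for $n_k^{\#}n_j$ lowers one exponent while raising the other. The fix is routine --- normalise so that $q<n_j^{\#}$, run your cancellation to get $q=0$, and then either $p<n_k^{\#}$ (your argument gives $p=0$, contradiction) or $p=n_k^{\#}$ is forced by $(n_k^{\#}-p)n_j=(a-c_i)n_i\ge 0$, whence $c_in_i=c_jn_j$, contradicting the hypothesis that the ideal is a complete intersection on $n_i$ --- but as written this is a genuine gap you flagged without closing.
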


\begin{proof} The circuit with support on $j,k$ is
 ${\bf u}=(0, \ldots, 0,n_k^{\#}, 0, \ldots, 0,-n_j^{\#}, 0, \ldots, 0)$, where the two nonzero elements $n_k^{\#}, n_j^{\#}$ are in the $j^{th}$ and $k^{th}$ position respectively and $n_k^{\#}, n_j^{\#}$ are the $n_k, n_j$ divided by their greatest common divisor. To prove one implication, let $I_{(n_i, n_j, n_k)} $ be a complete intersection on $n_i$. Then $n_k^{\#}=c_j$ and $n_j^{\#}=c_k$ and thus g.c.d$(c_j, c_k)=1$. Suppose that the circuit $D({\bf u})$ is not indispensable in   $\Lambda (T)_{\{i\}}$ and let  $D({\bf u})=D({\bf v})+_{sc}D({\bf w})$ be a proper semiconformal decomposition of $D({\bf u})$. Then, by Proposition \ref{conformal}, we have that $[{\bf u}]^i=[{\bf v}]^i+_{c}[{\bf w}]^i$. Also, coordinate wise $c_j=v_j+_cw_j=a+b$ and
 $c_k=v_k+_c w_k=-c-d$, where $a,b,c,d\in \mathbb{N}$.
Moreover, from the semiconformal decomposition of $D({\bf u})$, we have that $0 = v_i+_{sc} w_i$, so $v_i=-e, w_i=e$, where $e\in \mathbb{N}$. The rest of the components of ${\bf u}, {\bf v}, {\bf w}$ are zero,  since the only conformal decomposition of $0$ is $0+0$, by Proposition \ref{conformal}.
 Then, since
 ${\bf u}, {\bf v}, {\bf w}\in \Ker_{\mathbb{Z}}(T)$ and
 \begin{eqnarray*}
{\bf v} & = &  (0, \ldots, 0, ~a , 0, \ldots, -e, \ldots, 0, -c , 0, \ldots, 0), \\
{\bf w} & =  & (0, \ldots, 0, ~b , 0, \ldots, \hspace{0.3cm}e, \ldots, 0, -d , 0, \ldots, 0) ,
 \end{eqnarray*}
  we have that $an_j-en_i-cn_k=0$ and
  $bn_j+en_i-dn_k=0$, where the possible nonzero components of ${\bf v}, {\bf w}$ are in the $j^{th}$, $i^{th}$ and $k^{th}$ positions. This implies that $an_j=cn_k+en_i$ and we distinguish two cases: $a=0$ and $a>0$.
  If $a=0$, then $an_j=0=cn_k+en_i$ implies that $c=0$ and $e=0$, which means that ${\bf v}={\bf 0}$ and $D({\bf v})={\bf 0}$, a contradiction.
  In the case that $a>0$, then $an_j$ belongs to the semigroup    $<n_k,n_i>$. However, $c_jn_j$ is the smallest multiple of $n_j$ that belongs to the semigroup $<n_k,n_i>$. Thus, $a\ge c_j=a+b$, which implies that $a=c_j$
   and $b=0$. We similarly argue for $dn_k=bn_j+en_i$. If $d=0$, then  $dn_k=0=bn_j+en_i$ implies $b=0$ and $e=0$. This means that ${\bf w}={\bf 0}$ and $D({\bf w})={\bf 0}$, a contradiction.
   In the case that $d>0$, then $dn_k$ belongs to the semigroup  $<n_i,n_j>$. However, $c_kn_k$ is the smallest multiple of $n_k$ that belongs to the semigroup $<n_i,n_j>$.  Thus, $d\ge c_k=c+d$, which implies that $d=c_k$
   and $c=0$.

   In conclusion, from $an_j=cn_k+en_i$ and $dn_k=bn_j+en_i$, we have that $b=c=0$, $a=c_j$, $d=c_k$ and thus $c_jn_j=en_i$ and  $c_kn_k=en_i$. The equation $c_jn_j=en_i$ implies that g.c.d$(c_j, e)=1$, since otherwise a smaller
   multiple of $n_j$ would belong to the semigroup $<n_k,n_i>$. Then $e$ divides $n_j$ and thus $n_i$ is a multiple of $c_j$. Similarly from $c_kn_k=en_i$ we have
   g.c.d$(c_k, e)=1$. Then $e$ divides $n_k$ and thus $n_i$ is a multiple of $c_k$.
   Since g.c.d$(c_j, c_k)=1$ we have that $n_i=\lambda c_j c_k$. Then  $c_jn_j=en_i$ implies
   $n_j=\lambda e c_k$ and $c_kn_k=en_i$ implies $n_k=\lambda e c_j$.

   Summarizing we have $n_i=\lambda c_j c_k$, $n_j=\lambda e c_k$,  $n_k=\lambda e c_j$,  g.c.d$(c_j, e)=1$, g.c.d$(c_k, e)=1$ and g.c.d$(c_j, c_k)=1$. Recall that $c_in_i$ is the smallest multiple of $n_i$ that belongs to the semigroup generated by $n_j, n_k$. Then $c_in_i=c_{ij}n_j+c_{ik}n_k$ implies that $c_i\lambda c_j c_k=c_{ij}\lambda e c_k+c_{ik}\lambda e c_j$. We conclude that $e$ divides $c_i$, since  g.c.d$(c_j, e)=1$, g.c.d$(c_k, e)=1$. Therefore $e\le c_i$, but from the defining property of $c_i$ and the fact that $en_i=c_jn_j$ we have $e\ge c_i$. Thus $c_i=e$ and $c_in_i=c_jn_j=c_kn_k$.
   This means that $I_{(n_i, n_j, n_k)} $ is complete intersection on all, a contradiction. Thus, $D({\bf u})$ is indispensable  in   $\Lambda (T)_{\{i\}}$.

   To prove the other implication, suppose now that $I_{(n_i, n_j, n_k)} $ is not a complete intersection on $n_i$. It follows then that either  is a complete intersection on all or is not complete intersection at all.
If it is complete intersection on all then $c_jn_j=c_in_i=c_kn_k$. Then, we have the proper semi-conformal decomposition
${\bf u}= {\bf v}+_{sc}{\bf w}$, where
 \begin{eqnarray*}
{\bf u} & = &  (0, \ldots, ~c_j~,\ldots , 0, ~0~, 0, \ldots, 0,-c_k, 0, \ldots, 0),\\
{\bf v} & = &  (0, \ldots, ~c_j~,\ldots , 0, -c_i, 0, \ldots, 0, \hspace{0.3cm} 0~,~ 0, \ldots, 0), \\
{\bf w} & =  & (0, \ldots, ~0~,\ldots , 0, ~c_i~, 0, \ldots, 0, -c_k, 0, \ldots, 0) \text{ , }
 \end{eqnarray*}
  where the components that can be nonzero in at least one vector are in the $j^{th}$, $i^{th}$ and $k^{th}$-positions.
   This implies that $D({\bf u})=D({\bf v})+_{sc}D({\bf w})$, since on all components except the one in the $i^{th}$ position the sum is conformal. Thus $D({\bf u})$ is not indispensable  in   $\Lambda (T)_{\{i\}}$, a contradiction.

  In the case that $I_{(n_j, n_i, n_k)} $ is not complete intersection, then the circuit $(c_j, 0,-c_k)$ is not indispensable in $I_{(n_j,n_i,n_k)}$, therefore it has a proper semiconformal decomposition. Note that in terms of signs
   $(c_j, 0,-c_k)=(  \ast, \ominus, \ominus )+_{sc}( \oplus ,  \oplus , \ast )={\bf v}+_{sc}{\bf w}$.
   The first $\ast$ is $+$ and the second $\ast$ is $-$, since $\Ker_{\ZZ}(n_j,n_i,n_k)\cap \NN^3=\{{\mathbf{0}}\}$.  Thus, the sum is conformal in the first and the last component. Then
   \[
    D((0, \ldots, c_j,\ldots , 0, 0, \ldots,-c_k, 0, \ldots, 0))=
    \]
   \[ D((0, \ldots, v_j,\ldots , v_i, 0, \ldots, v_k, 0, \ldots, 0))+_{sc}D((0, \ldots, w_j,\ldots , w_i, 0, \ldots, w_k, 0, \ldots, 0)) .
   \]

   Thus, $D({\bf u})$ is not indispensable  in   $\Lambda (T)_{\{i\}}$, a contradiction, and therefore the other implication is also proved. \qed

\end{proof}
 The following theorem shows that if $\{ i\}$ is a face of the strongly robust complex $\Delta _T$, then $n_i$ has a very special property. As we will see in Theorem \ref{Delta}, if $n_i$ satisfies this property, then $n_i$ is unique.

\begin{Theorem} \label{main1} Let $T=(n_1, n_2,  \dots ,n_s)$.
If the strongly robust complex $\Delta _T$ contains $\{ i\}$ as a face then
for every $j,k\in [s]$ with $j,k \neq i$, $I_{(n_i, n_j, n_k)} $ is a complete intersection on $n_i$.
\end{Theorem}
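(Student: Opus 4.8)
The plan is to convert the hypothesis on $\Delta_T$ into a statement about the indispensability of circuits in the second Lawrence lifting, and then to invoke Lemma~\ref{circuit-ind} for each relevant pair of indices. First I would apply Theorem~\ref{Lawrence}: since $\{i\}$ is a face of the strongly robust complex $\Delta_T$, the toric ideal $I_{\Lambda(T)_{\{i\}}}$ is strongly robust. Recall from the introduction that being strongly robust means the Graver basis is a minimal generating set, which forces the Graver basis, the set of indispensable elements, and every reduced Gr\"obner basis to coincide. In particular, every element of the Graver basis of $\Lambda(T)_{\{i\}}$ is indispensable.

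Next I would fix distinct $j,k\in [s]$ with $j,k\neq i$ and consider the circuit ${\bf u}$ of $T$ supported on $\{j,k\}$. By the one-to-one correspondence between circuits of $T$ and circuits of $\Lambda(T)_{\{i\}}$ given in \cite[Theorem 1.11]{PTV} (the same correspondence exploited in Lemma~\ref{Lemma}), the vector $D({\bf u})$ is a circuit of $\Lambda(T)_{\{i\}}$. Since circuits always lie in the Graver basis \cite[Proposition 4.11]{St}, and $I_{\Lambda(T)_{\{i\}}}$ is strongly robust, the circuit $D({\bf u})$ must be indispensable.

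Finally, Lemma~\ref{circuit-ind} asserts precisely that $D({\bf u})$ is indispensable in $\Lambda(T)_{\{i\}}$ if and only if $I_{(n_i, n_j, n_k)}$ is a complete intersection on $n_i$. Combining this with the previous paragraph gives that $I_{(n_i, n_j, n_k)}$ is a complete intersection on $n_i$, and since the pair $j,k$ was arbitrary the theorem follows.

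As regards difficulty, essentially all the substance has already been absorbed into Lemma~\ref{circuit-ind} (together with the supporting Proposition~\ref{conformal} and Lemma~\ref{Lemma}), so this theorem is a clean corollary rather than an independent obstacle. The only step demanding a little care is the logical chain ``strongly robust $\Rightarrow$ Graver basis equals the indispensable set $\Rightarrow$ every circuit is indispensable,'' which hinges on the standard fact that circuits belong to the Graver basis; once that is in place, no genuine difficulty remains.
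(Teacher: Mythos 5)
Your proposal is correct and follows exactly the paper's argument: strong robustness of $I_{\Lambda(T)_{\{i\}}}$ via Theorem~\ref{Lawrence} forces every circuit to be indispensable (since circuits lie in the Graver basis, which equals the indispensable set), and then Lemma~\ref{circuit-ind} applied to the circuit supported on $\{j,k\}$ yields the conclusion. The only difference is that you spell out the intermediate steps (the correspondence $D$ on circuits and the citation to \cite[Proposition 4.11]{St}) a bit more explicitly than the paper does.
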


\begin{proof}
Suppose that $\{ i\}$ is a face of $\Delta _T$, then $\Lambda (T)_{\{i\}}$ is strongly robust. Then all circuits are indispensable, since for strongly robust toric ideals the set of indispensable elements is equal to the Graver basis and the latter contains all circuits. Therefore, by Lemma \ref{circuit-ind},
$I_{(n_i, n_j, n_k)} $ is a complete intersection on $n_i$. \qed

\end{proof}

\begin{Theorem} \label{Delta}
Let $T=(n_1, n_2,  \dots ,n_s)$.
The strongly robust complex $\Delta _T$ is either $\{\emptyset \}$ or $\{\emptyset, \{i\}\}$
for exactly one $i\in [s]$.

\end{Theorem}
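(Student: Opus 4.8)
The plan is to combine the dimension bound already established with the rigidity coming from Theorem~\ref{main1}. First I would invoke Theorem~\ref{dimension}, which gives $\dim(\Delta_T)\leq 0$, to conclude that every face of $\Delta_T$ is either the empty face or a single vertex $\{i\}$; since $\Delta_T$ is a nonempty simplicial complex, $\emptyset$ is always a face. Thus $\Delta_T$ consists of $\emptyset$ together with some (possibly empty) collection of singletons, and the entire content of the statement reduces to showing that \emph{at most one} singleton can occur.

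To establish this uniqueness I would argue by contradiction. Suppose two distinct vertices $\{i\}$ and $\{i'\}$ were both faces of $\Delta_T$. Since we are in the monomial-curve setting with $s\geq 3$, I can choose an index $k\in[s]$ with $k\neq i,i'$. Now I apply Theorem~\ref{main1} twice to the same triple of entries: using that $\{i\}$ is a face, with the roles $j=i'$, I get that $I_{(n_i,n_{i'},n_k)}$ is a complete intersection on $n_i$; using that $\{i'\}$ is a face, with the roles $j=i$, I get that the very same ideal $I_{(n_{i'},n_i,n_k)}=I_{(n_i,n_{i'},n_k)}$ is a complete intersection on $n_{i'}$.

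The contradiction then comes from the remark following Definition~\ref{Def}: a fixed three-variable toric ideal can be a complete intersection on at most one of its variables. Concretely, being a complete intersection on $n_i$ means $c_{i'}n_{i'}=c_kn_k\neq c_in_i$, whereas being a complete intersection on $n_{i'}$ means $c_in_i=c_kn_k\neq c_{i'}n_{i'}$; together these force $c_in_i=c_{i'}n_{i'}=c_kn_k$, contradicting either inequality (equivalently, forcing the all-equal case that both definitions explicitly exclude). Hence no two distinct singletons can simultaneously be faces, and combined with the first paragraph this yields that $\Delta_T$ is either $\{\emptyset\}$ or $\{\emptyset,\{i\}\}$ for exactly one $i\in[s]$.

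The step I expect to require the most care is the conceptual bookkeeping in the second paragraph: one must make explicit that the two invocations of Theorem~\ref{main1} refer to the \emph{same} unordered ideal, and that ``complete intersection on $n_i$'' and ``complete intersection on $n_{i'}$'' are genuinely mutually exclusive properties of that single triple. Once the symmetry of the ideal under reordering of its defining entries is stated and the two Betti-degree conditions are written side by side, the contradiction is immediate, so the work is organizational rather than computational.
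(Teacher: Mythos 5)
Your proposal is correct and follows essentially the same route as the paper: bound the dimension via Theorem~\ref{dimension}, then rule out two distinct vertices by applying Theorem~\ref{main1} twice to a common triple $(n_i,n_{i'},n_k)$ and invoking the mutual exclusivity noted after Definition~\ref{Def}. The extra detail you supply on why the two Betti-degree conditions clash is exactly the content of that remark, so nothing is missing.
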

  \begin{proof} By Theorem \ref{dimension} $\dim(\Delta _T)\leq 0,$ thus $\Delta _T=\{\emptyset \}$ or $\Delta _T=\{\emptyset \}\union \{\{i\}| i\in \Sigma\subset [s] \}.$ We claim that $\Sigma$ contains at most one element. Suppose not, and let $i,j\in \Sigma$, $i\not =j$. Take any $k\in [s]$ different from $i, j$ and apply two times Theorem \ref{main1}. We have that $I_{(n_i, n_j, n_k)} $ is a complete intersection on $n_i$
  and $n_j$, and thus a contradiction, see remark after Definition \ref{Def}. Thus the set $\Sigma$ can have at most one element.  \qed

  \end{proof}

\begin{Remark} \label{remark} {\em It follows from Theorem \ref{Delta} that toric ideals of monomial curves in  $A^s$ for $s\ge 3$ are never strongly robust,
since in this case $I_T$ is a simple toric ideal, therefore all of its bouquets are not mixed. Thus, $I_T$ is a $T_{[s]}$-robust ideal and $[s]\not \in \Delta_T$. The fact that the toric ideal of a monomial curve is not robust for $s\geq 3$, thus also not strongly robust, was also noticed in \cite[Corollary 4.17]{G-MT}.}
\end{Remark}

Although the converse of Theorem \ref{main1} is not true in general, it is true for $1\times 3$ matrices, as the following Theorem shows.

\begin{Theorem}
\label{curveA3}
  Let $T=(n_1, n_2, n_3)$. We have the following three cases
  \begin{itemize}
   \item if $I_T$ is not complete intersection, then $\Delta _T$ is the empty complex;
   \item if $I_T$ is complete intersection on $n_i$, for an $i\in [3]$,  then  $\Delta _T=\{\emptyset, \{i\}\}$;
   \item if $I_T$ is complete intersection on all, then $\Delta _T$ is the empty complex.

  \end{itemize}

 \end{Theorem}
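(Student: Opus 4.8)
The plan is to reduce everything to the two structural results already in hand, Theorem~\ref{Delta} and Theorem~\ref{main1}, and then to settle the single genuinely new point, namely the converse of Theorem~\ref{main1} in the three‑variable case. Theorem~\ref{Delta} already tells us that $\Delta_T$ is either $\{\emptyset\}$ or $\{\emptyset,\{i\}\}$ for a single $i$, and Theorem~\ref{main1} says a vertex $\{i\}$ can occur only when $I_T=I_{(n_i,n_j,n_k)}$ is a complete intersection on $n_i$ (here $\{j,k\}=[3]\setminus\{i\}$ is forced, since $s=3$). This immediately disposes of the first and third cases. If $I_T$ is not a complete intersection it is a complete intersection on no single $n_i$; and if $I_T$ is a complete intersection on all, then $c_1n_1=c_2n_2=c_3n_3$, which contradicts the defining inequality $c_jn_j=c_kn_k\neq c_in_i$ of ``complete intersection on $n_i$''. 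In both cases no vertex can survive, so $\Delta_T=\{\emptyset\}$. The entire content therefore lies in the second case.

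For the second case I would prove the converse of Theorem~\ref{main1}: if $I_T$ is a complete intersection on $n_i$, then $\{i\}\in\Delta_T$. Taking $i=1$ without loss of generality, Theorem~\ref{Lawrence} reduces this to showing that $I_{\Lambda(T)_{\{1\}}}$ is strongly robust, i.e. that every Graver element is indispensable. Since $D$ identifies $\Ker_{\ZZ}(T)$ with $\Ker_{\ZZ}(\Lambda(T)_{\{1\}})$, and a proper conformal decomposition of $D(\mathbf{u})$ carries exactly the same data as one of $\mathbf{u}$ (the two deleted coordinates merely duplicate, with opposite sign, coordinates $2,3$), every Graver element of $\Lambda(T)_{\{1\}}$ has the form $D(\mathbf{u})$ with $\mathbf{u}\in\Gr(T)$. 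When $u_1=0$ the vector $\mathbf{u}$ is the circuit on $\{2,3\}$ and $D(\mathbf{u})$ is indispensable by Lemma~\ref{circuit-ind}. So I must handle $\mathbf{u}\in\Gr(T)$ with $u_1\neq0$ and show $D(\mathbf{u})$ has no proper semiconformal decomposition. By Proposition~\ref{conformal} any such $D(\mathbf{u})=D(\mathbf{v})+_{sc}D(\mathbf{w})$ is conformal in coordinates $2,3$; since $\mathbf{u}$ is primitive it cannot also be conformal in coordinate $1$, and the semiconformal rule forbids the pattern $v_1>0,\,w_1<0$, so non‑conformality in coordinate $1$ forces $v_1<0<w_1$.

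The heart of the argument is then a sign‑and‑semigroup analysis. Conformality in coordinates $2,3$ together with $n_2v_2+n_3v_3=n_1|v_1|>0$ and $n_2w_2+n_3w_3=-n_1w_1<0$ forces $v_2,v_3$ to have strictly opposite signs; say $v_2>0>v_3$, whence $w_2\ge 0$, $w_3<0$, $u_2>0$ and $u_3<0$. I would now invoke the very definition of $c_2,c_3$: $c_2n_2$ is the smallest multiple of $n_2$ in the semigroup $\langle n_1,n_3\rangle$ and $c_3n_3$ is the smallest multiple of $n_3$ in $\langle n_1,n_2\rangle$. From $n_2v_2=n_1|v_1|+n_3|v_3|\in\langle n_1,n_3\rangle$ we get $v_2\ge c_2$, and from $n_3|w_3|=n_1w_1+n_2w_2\in\langle n_1,n_2\rangle$ we get $|w_3|\ge c_3$; hence $u_2=v_2+w_2\ge c_2$ and $|u_3|=|v_3|+|w_3|\ge c_3$. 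Using $c_2n_2=c_3n_3$, the vector $(0,c_2,-c_3)$ lies in $\Ker_{\ZZ}(T)$ and agrees with $\mathbf{u}$ in sign on coordinates $2,3$, so $\mathbf{u}=(0,c_2,-c_3)+\bigl(\mathbf{u}-(0,c_2,-c_3)\bigr)$ is a proper conformal decomposition (proper because $u_1\neq0$), contradicting $\mathbf{u}\in\Gr(T)$. The opposite choice $v_3>0>v_2$ is symmetric, splitting off $(0,-c_2,c_3)$ instead. This contradiction shows $D(\mathbf{u})$ is indispensable, so $I_{\Lambda(T)_{\{1\}}}$ is strongly robust and $\{i\}\in\Delta_T$; with Theorem~\ref{Delta} this yields $\Delta_T=\{\emptyset,\{i\}\}$.

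The step I expect to be the main obstacle is the middle one: first pinning down that a semiconformal‑but‑not‑conformal decomposition must have the precise sign pattern $v_1<0<w_1$ with $v_2,v_3$ opposite, and then recognizing that the defining minimality of $c_2$ and $c_3$ as smallest multiples inside the two two‑generated subsemigroups is exactly what gives $v_2\ge c_2$ and $|w_3|\ge c_3$, which in turn lets me split off the circuit $(0,c_2,-c_3)$ conformally. Everything else is bookkeeping with the isomorphism $D$ and the reductions supplied by Theorems~\ref{Delta} and~\ref{Lawrence}.
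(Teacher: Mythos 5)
Your proposal is correct and follows essentially the same route as the paper: reduce via Theorems~\ref{Delta}, \ref{main1} and \ref{Lawrence}, use Proposition~\ref{conformal} to force conformality off the deleted coordinate, and then use the minimality of $c_2,c_3$ in the two-generated subsemigroups to split off the circuit $(0,c_2,-c_3)$ conformally, contradicting primitivity (the paper organizes this as three explicit sign cases plus Lemma~\ref{Lemma} for the circuits through $1$, whereas you give one unified argument for $u_1\neq 0$, but the content is the same). The only nitpick is your phrase ``$v_2,v_3$ have strictly opposite signs'' — one of them may be zero — but your subsequent inequalities only use $v_2>0$ and $w_3<0$ strictly, so nothing breaks.
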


 \begin{proof}
 By Proposition \ref{Delta}, we have that the strongly robust complex $\Delta _T$ is either $\{\emptyset \}$ or $\{\emptyset, \{i\}\}$
for one $i\in [3]$. By Theorem \ref{main1}, if $ \{i\}\in \Delta _T $, then $I_{(n_i, n_j, n_k)} $ is a complete intersection on $n_i$. Therefore it remains to show that if $I_T$ is complete intersection on $n_i$, for an $i\in [3]$,  then  $\Delta _T=\{\emptyset, \{i\}\}$, or equivalently that $\Lambda (T)_{\{i\}}$ is strongly robust. Without loss of generality we may suppose that $i=1$. By Lemma \ref{Lemma} the two circuits with 1 in their support are indispensable, while by Lemma \ref{circuit-ind} the remaining circuit with support on $\{2,3\}$ is indispensable in
 $\Lambda (T)_{\{1\}}$, as $I_T$ is a complete intersection on $n_1$.  Thus it remains to prove that elements  ${\bf u}$ in the $\Gr(T)$ with full support are indispensable in  $\Lambda (T)_{\{1\}}$. Taking ${\bf u}$
 or $-{\bf u}$, we can suppose that the first component of ${\bf u}$ is positive.

 There are three cases then for ${\bf u}$: $(1)\ {\bf u}=(a, -b, -c)$,  $(2)\ {\bf u}=(a, b, -c)$, and  $(3)\ {\bf u}=(a, -b, c)$, where $a, b, c\in \mathbb{N}$.

$(1)\ {\bf u}=(a, -b, -c)$.  Let $D({\bf u})=D({\bf v})+_{sc}D({\bf w})$ be a semiconformal decomposition
of $D({\bf u})$ in $\Lambda (T)_{\{1\}}$. Then, from Proposition \ref{conformal},  we have $[{\bf u}]^1=[{\bf v}]^1+_{c}[{\bf w}]^1$, but $[{\bf u}]^1=(-b, -c)$ and the sum being conformal implies that all components of
 $[{\bf v}]^1, [{\bf w}]^1$ are non positive. Since ${\bf v}, {\bf w}\in \Ker_{\mathbb{Z}}(T)$, this means
 that their first component is non negative. However, then the sum $D({\bf u})=D({\bf v})+_{c}D({\bf w})$
 is conformal, and $D({\bf u})\in \Gr(\Lambda (T)_{\{1\}})$ since ${\bf u}\in \Gr(T)$, \cite{PTV}. As elements of the Graver basis as characterised as those with no proper conformal decomposition,
one of the $D({\bf v}), D({\bf w})$  is zero and thus $D({\bf u})$ is indispensable.

$(2)\ {\bf u}=(a, b, -c)$. Let $D({\bf u})=D({\bf v})+_{sc}D({\bf w})$ be a semiconformal decomposition
of $D({\bf u})$ in $\Lambda (T)_{\{1\}}$. Then, from Proposition \ref{conformal},  we have $[{\bf u}]^1=[{\bf v}]^1+_{c}[{\bf w}]^1$. However, $[{\bf u}]^1=(b, -c)$ and the sum being conformal implies that the second component of each of the ${\bf v}, {\bf w}$ is non negative, while the third component is non positive. Taking into account that the sum $D({\bf u})=D({\bf v})+_{sc}D({\bf w})$ is semiconformal, we get that the sign patent of ${\bf v}, {\bf w}$
is ${\bf v}=( \ast, \oplus, \ominus )$ and ${\bf w}=(\oplus ,\oplus , \ominus)$. If $\ast=\oplus$,
then ${\bf u}$ will have a conformal decomposition and  since ${\bf u}\in \Gr(T)$, one of the ${\bf v}, {\bf w}$  is zero. Which implies that
one of the $D({\bf v}), D({\bf w})$  is zero and thus $D({\bf u})$ is indispensable.

 If $\ast=\ominus$ then $ {\bf u}=(a, b, -c)={\bf v}+{\bf w}=(-a_1, b_1, -c_1)+(a_2, b_2, -c_2), $ where
$a_1, b_1, c_1, a_2, b_2, c_2\in \mathbb{N}$. Then, $-a_1n_1+b_1n_2-c_1n_3=0$ and $a_2n_1+b_2n_2-c_2n_3=0$, since ${\bf v}, {\bf w}\in \Ker_{\mathbb{Z}}(T)$.
In the first equation, $b_1=0$ implies that $a_1=0=c_1$ thus ${\bf v}={\bf 0}$ and the proof is complete, as $D({\bf u})$ is indispensable.
Otherwise,  $b_1n_2$ belongs to the semigroup generated by $n_1,n_3$ and  $(n_1, n_2, n_3) $ is a complete intersection on $n_1$ implies $b=b_1+b_2\ge b_1\ge n_3^{\#}.$ Similarly from the second equation we get that either ${\bf w}={\bf 0}$ or $c=c_1+c_2\ge c_2\ge n_2^{\#}.$ But then the proper sum $(a,b,-c)=(a,b-n_3^{\#},-c+n_2^{\#})+(0, n_3^{\#}, -n_2^{\#}) $ is conformal,
which is a contradiction since ${\bf u}\in \Gr(T)$.

$(3)\ {\bf u}=(a, -b, c)$. For the third case we argue in a similar manner as in the second case.

Thus in all cases $D({\bf u})$ is indispensable in $\Lambda (T)_{\{1\}}$ and thus $\Lambda (T)_{\{1\}}$ is strongly robust.  \qed

\end{proof}

A different proof of Theorem \ref{curveA3} can be given using Lemmata \ref{Lemma}, \ref{circuit-ind} and \cite[Corollary 4.6]{KTV}, since  for $T=(n_1, n_2, n_3)$ the ideal $I_{\Lambda (T)_{\{1\}}}$ has codimension 2. We have preferred the above proof, as it follows the style of the remaining proofs in this article.
\newline

\section{Primitive elements and strongly robustness}
\label{primitiveRobustness}
In this section, we generalise the notion of a primitive element and that of a Graver basis and use it to give a necessary and sufficient criterion for a vertex $\{i\}$ to be a face of the strongly robust simplicial complex.
\begin{Definition}
    An element ${\bf u}\in S\subset \mathbb{Z}^n$ is called primitive in $S$ if there is no ${\bf v}\in S $, ${\bf v}\not ={\bf u} $ such that ${\bf v}^+\leq {\bf u}^+$ and ${\bf v}^-\leq {\bf u}^-$. The set of primitive elements in $S$ is denoted by $\Graver(S)$.
\end{Definition}
Primitive elements in $S=\Ker_{\mathbb{Z}}(A)$ constitute the Graver basis of $A$.

\begin{Definition} Let $T=(n_1, n_2,  \dots ,n_s)$, we define
    $$\Gr(T)^i=\{[{\bf u}]^i|{\bf u}\in \Gr(T)\}\subset \mathbb{Z}^{s-1}. $$
\end{Definition}
\begin{Proposition} \label{primitive} Let ${\bf u}\in \Gr(T).$ Then  $D({\bf u})$ is indispensable in $\Lambda (T)_{\{i\}}$ if and only if $[{\bf u}]^i$ is primitive in $\Gr(T)^i$.
\end{Proposition}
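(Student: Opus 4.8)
The plan is to make the isomorphism $D$ and the kernel of $\Lambda(T)_{\{i\}}$ completely explicit, and then convert both sides of the equivalence into statements about conformal decompositions of $[{\bf u}]^i$. Writing out $\Lambda(T)_{\{i\}}$, its kernel consists exactly of the vectors $D({\bf w})=(w_1,\dots,w_s,-w_1,\dots,\widehat{-w_i},\dots,-w_s)$ with ${\bf w}\in\Ker_{\ZZ}(T)$, where the hat indicates that the $i$-th entry of the second block is absent (the $(s+i)$-th column and the equation $x_i+y_i=0$ having been removed). Consequently a proper semiconformal decomposition of $D({\bf u})$ is precisely a proper sum ${\bf u}={\bf v}+{\bf w}$ in $\Ker_{\ZZ}(T)$ with $D({\bf u})=D({\bf v})+_{sc}D({\bf w})$. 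First I would read off the sign conditions coordinate by coordinate to record that such a sum is semiconformal if and only if (a) $v_\ell$ and $w_\ell$ are conformal for every $\ell\neq i$, and (b) $u_i=v_i+_{sc}w_i$ is semiconformal in the single coordinate $i$; part (a) is exactly Proposition \ref{conformal}.

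For the forward implication I would assume $[{\bf u}]^i$ is not primitive in $\Gr(T)^i$ and manufacture a proper semiconformal decomposition of $D({\bf u})$. Non-primitivity supplies ${\bf w}\in\Gr(T)$ with $[{\bf w}]^i\neq[{\bf u}]^i$, $([{\bf w}]^i)^+\leq([{\bf u}]^i)^+$ and $([{\bf w}]^i)^-\leq([{\bf u}]^i)^-$; setting ${\bf v}={\bf u}-{\bf w}$ makes (a) automatic, since each $w_\ell$ lies conformally between $0$ and $u_\ell$, so $v_\ell$ retains the sign of $u_\ell$. The only delicate point is the single coordinate $i$, where $w_i$ is uncontrolled; here I would exploit the asymmetry of semiconformality and choose which summand comes first according to the sign of $u_i$. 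A short case analysis on the sign of $u_i$ shows that one of the two orderings always satisfies (b). Since $[{\bf v}]^i=[{\bf u}]^i-[{\bf w}]^i\neq 0$ and ${\bf w}\neq 0$, the decomposition is proper, so $D({\bf u})$ is not indispensable.

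For the reverse implication I would argue contrapositively: assuming $D({\bf u})$ is not indispensable, fix a proper semiconformal decomposition $D({\bf u})=D({\bf v})+_{sc}D({\bf w})$. By Proposition \ref{conformal}, $[{\bf u}]^i=[{\bf v}]^i+_c[{\bf w}]^i$ is conformal with both summands nonzero, since any vector of $\Ker_{\ZZ}(T)$ supported only on $i$ must vanish; hence $[{\bf w}]^i$ lies conformally below $[{\bf u}]^i$ and differs from it, so there is a coordinate $\ell_0\neq i$ with $|w_{\ell_0}|<|u_{\ell_0}|$. Because ${\bf w}$ need not be primitive, the final step is to refine it, and I expect this to be the main obstacle: using that $\Gr(T)$ consists exactly of the vectors admitting no proper conformal decomposition, I would repeatedly replace the current vector by one of its conformal summands having nonzero $[\cdot]^i$ (such a summand exists because $[{\bf w}]^i\neq 0$), observing that each step strictly decreases the $1$-norm and keeps the vector conformally below $[{\bf w}]^i$. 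This descent terminates at some ${\bf g}\in\Gr(T)$ with $[{\bf g}]^i\neq 0$; since $[{\bf g}]^i$ is conformally below $[{\bf w}]^i$, we get $|g_{\ell_0}|\leq|w_{\ell_0}|<|u_{\ell_0}|$, so $[{\bf g}]^i\neq[{\bf u}]^i$. Thus ${\bf g}$ witnesses that $[{\bf u}]^i$ is not primitive in $\Gr(T)^i$, and the equivalence follows.
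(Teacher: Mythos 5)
Your proof is correct. The forward direction coincides with the paper's: both take the Graver witness below $[{\bf u}]^i$, subtract to get ${\bf u}={\bf v}+{\bf w}$, note that conformality in every coordinate other than $i$ is automatic, and repair coordinate $i$ by ordering the two summands appropriately (one small imprecision: the right order is dictated by the signs of $v_i$ and $w_i$ rather than by the sign of $u_i$ --- the only obstruction to $a+_{sc}b$ in a single coordinate is $a>0$ and $b<0$, which cannot hold for both orderings --- but your conclusion that one ordering always works is exactly right). The reverse direction is where you take a genuinely different route. The paper selects, among all proper semiconformal decompositions $D({\bf u})=D({\bf v})+_{sc}D({\bf w})$, one with $w_i$ minimal, and shows that this extremal ${\bf w}$ must already lie in $\Gr(T)$, since a proper conformal splitting of ${\bf w}$ would yield a semiconformal decomposition with smaller $i$-th entry; this produces a Graver witness in one step, at the price of first establishing $v_i<0<w_i$ (via $D({\bf u})\in\Gr(\Lambda(T)_{\{i\}})$) and of checking that the modified sum $D({\bf v}+{\bf w}')+_{sc}D({\bf w}'')$ is still semiconformal. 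You instead take an arbitrary proper semiconformal decomposition and run a conformal descent on ${\bf w}$ down to some ${\bf g}\in\Gr(T)$, carrying along a coordinate $\ell_0\neq i$ with $|w_{\ell_0}|<|u_{\ell_0}|$ (available because $[{\bf v}]^i\neq 0$, as nonzero kernel vectors cannot be supported only on $i$) to certify at the end that $[{\bf g}]^i\neq[{\bf u}]^i$. Your version is more elementary --- it uses only the characterization of $\Gr(T)$ by conformal indecomposability plus a norm argument, and never needs the sign analysis at coordinate $i$ --- while the $\ell_0$ bookkeeping is precisely what prevents the descent from terminating at a witness equal to $[{\bf u}]^i$; without it the argument would have a gap. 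Both routes rest on Proposition \ref{conformal} and on the identification of $\Ker_{\ZZ}(\Lambda(T)_{\{i\}})$ with $D(\Ker_{\ZZ}(T))$.
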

\begin{proof} Suppose  $[{\bf u}]^i$ is not primitive in $\Gr(T)^i$. Then there exists an element ${\bf v}
    \in \Gr(T)$ such that  $[{\bf v}^+]^i\leq [{\bf u}^+]^i$ and $[{\bf v}^-]^i\leq [{\bf u}^-]^i$. Set ${\bf w}={\bf u}-{\bf v}$. Then it follows that ${\bf u}={\bf v}+{\bf w}$ and $[{\bf u}]^i=[{\bf v}]^i+_c[{\bf w}]^i$. Looking at the i-components of ${\bf v}, {\bf w}$ and putting in front the negative one we get that one
    of the sums  $D({\bf u})=D({\bf v})+D({\bf w})$ or  $D({\bf u})=D({\bf w})+D({\bf v})$ is semiconformal. Thus $D({\bf u})$ is not indispensable in $\Lambda (T)_{\{i\}}$.

    Suppose
    that $D({\bf u})$ is not indispensable in $\Lambda (T)_{\{i\}}$.
        Then there is a proper semiconformal decomposition $D({\bf u})=D({\bf v})+_{sc}D({\bf w})$, note that in any decomposition like that the sum is conformal in every component except possible at the i-th component, see Proposition \ref{conformal}. But ${\bf u}\in \Gr(T)$ therefore $D({\bf u})\in \Gr(\Lambda (T)_{\{i\}})$,  thus it does not have a proper conformal decomposition. We conclude that the i-th component of ${\bf v}$ is negative and the i-th component of ${\bf w}$ is positive. From all semiconformal decompositions $D({\bf u})=D({\bf v})+_{sc}D({\bf w})$ choose one with $w_i$ smallest. Then, for these choices of
    ${\bf v}, {\bf w}$, we claim that ${\bf w}\in \Gr(T).$ Suppose not, then there is
    a proper conformal decomposition ${\bf w}={\bf w'}+_c{\bf w''}={\bf w''}+_c{\bf w'}$. Choose ${\bf w''}$ to be one with $w_i>w''_i$. But then it is easy to see that $D({\bf u})=D({\bf v}+{\bf w'})+_{sc}D({\bf w''})$, which is a contradiction, since
    $w_i>w''_i$. Thus ${\bf w}\in \Gr(T)$. Therefore $[{\bf w}]^i$ is in $\Gr(T)^i$
    and $[{\bf w}^+]^i\leq [{\bf u}^+]^i$ and $[{\bf w}^-]^i\leq [{\bf u}^-]^i$. Thus
     $[{\bf u}]^i$ is not primitive in $\Gr(T)^i$.
\qed

\end{proof}

\begin{Theorem} \label{main} Let $T=(n_1, n_2,  \dots ,n_s)$.
The strongly robust complex $\Delta _T$ contains $\{ i\}$ as a face if and only if
for every element ${\bf u}\in \Gr(T)$ the  $[{\bf u}]^i$ is primitive in $\Gr(T)^i$
 $($or equivalently if and only if
$\Graver(\Gr(T)^i)=\Gr(T)^i$ $)$.

\end{Theorem}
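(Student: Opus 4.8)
The plan is to reduce Theorem \ref{main} to Proposition \ref{primitive} through the Lawrence criterion. By Theorem \ref{Lawrence}, the vertex $\{i\}$ is a face of $\Delta_T$ if and only if the toric ideal $I_{\Lambda(T)_{\{i\}}}$ is strongly robust, so everything comes down to expressing strong robustness of $\Lambda(T)_{\{i\}}$ in terms of the truncated set $\Gr(T)^i$. First I would recall, as explained in the introduction, that a toric ideal $I_A$ is strongly robust exactly when its Graver basis coincides with its set of indispensable elements, $\Gr(A)=S(A)$. Since every conformal decomposition is in particular semiconformal, one always has the inclusion $S(A)\subseteq \Gr(A)$; hence $I_A$ is strongly robust if and only if every element of $\Gr(A)$ is indispensable.

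Next I would make the Graver basis of $\Lambda(T)_{\{i\}}$ explicit. The linear isomorphism $D$ identifies $\Ker_{\ZZ}(T)$ with $\Ker_{\ZZ}\left(\Lambda(T)_{\{i\}}\right)$ and carries the Graver basis of $T$ bijectively onto the Graver basis of $\Lambda(T)_{\{i\}}$, so that $\Gr\left(\Lambda(T)_{\{i\}}\right)=\{D({\bf u}) \mid {\bf u}\in \Gr(T)\}$; this is \cite[Theorem 1.11]{PTV}, and is the same correspondence already used in the proofs of Proposition \ref{primitive} and Theorem \ref{curveA3}. Combining this with the previous paragraph, $\Lambda(T)_{\{i\}}$ is strongly robust if and only if $D({\bf u})$ is indispensable in $\Lambda(T)_{\{i\}}$ for every ${\bf u}\in \Gr(T)$, and the bijectivity of $D$ ensures that it suffices to test the elements $D({\bf u})$ one by one.

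Finally I would feed in Proposition \ref{primitive}, which states that for ${\bf u}\in \Gr(T)$ the element $D({\bf u})$ is indispensable in $\Lambda(T)_{\{i\}}$ precisely when $[{\bf u}]^i$ is primitive in $\Gr(T)^i$. Substituting this into the criterion above shows that $\{i\}$ is a face of $\Delta_T$ if and only if $[{\bf u}]^i$ is primitive in $\Gr(T)^i$ for every ${\bf u}\in \Gr(T)$. As $\Gr(T)^i=\{[{\bf u}]^i \mid {\bf u}\in \Gr(T)\}$ by definition, requiring every such truncation to be primitive is literally the statement that all elements of $\Gr(T)^i$ are primitive, that is $\Graver(\Gr(T)^i)=\Gr(T)^i$, giving the advertised reformulation. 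The argument is essentially a translation between three established facts, so I expect no real obstacle; the only point needing attention is the identification $\Gr\left(\Lambda(T)_{\{i\}}\right)=D(\Gr(T))$ together with the inclusion $S\subseteq\Gr$, which together let strong robustness be tested Graver-element by Graver-element.
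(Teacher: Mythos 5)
Your proposal is correct and follows essentially the same route as the paper's own proof: reduce via Theorem~\ref{Lawrence} to strong robustness of $I_{\Lambda(T)_{\{i\}}}$, identify its Graver basis as $D(\Gr(T))$ using \cite[Theorem 1.11]{PTV}, and conclude by Proposition~\ref{primitive}. No gaps; the paper's argument is a slightly terser version of exactly this chain.
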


\begin{proof}
Suppose that $\{ i\}$ is a face of     $\Delta _T$ then $\Lambda (T)_{\{i\}}$ is strongly robust. Therefore  $D({\bf u})$ is indispensable in $\Lambda (T)_{\{i\}}$ for every element ${\bf u}\in \Gr(T)$. Then  by Proposition \ref{primitive} for every element ${\bf u}\in \Gr(T)$ the  $[{\bf u}]^i$ is primitive in $\Gr(T)^i$.

Suppose now that
for every element ${\bf u}\in \Gr(T)$ the  $[{\bf u}]^i$ is primitive in $\Gr(T)^i$.
Then Proposition \ref{primitive} implies that  $D({\bf u})$ is indispensable in $\Lambda (T)_{\{i\}}$ for every ${\bf u}\in \Gr(T)$. But \cite[Theorem 1.11]{PTV} says that all
elements in the Graver basis of  $\Lambda (T)_{\{i\}}$ are in the form $D({\bf u})$
with ${\bf u}\in \Gr(T)$. Thus $\Lambda (T)_{\{i\}}$ is strongly robust and so $\{ i\}$ is a face of     $\Delta _T$.
\qed

\end{proof}

\section{Generalized Lawrence matrices}
\label{genmat}

 In \cite[Section 2]{PTV}, Petrovi{\' c} et al generalized the notion of a Lawrence matrix, see \cite[Chapter 7]{St}, by introducing generalized Lawrence matrices. For every matrix $A$ there exists a generalized  Lawrence matrix with the same kernel up to permutation of columns, see \cite[Corollary 2.3]{PTV}. Let $(c_{1},\ldots,c_{m})\in\ZZ^{m}$ be any   vector  having full support and with the greatest common divisor of all its components equal to $1$.
Then there exist integers $\lambda_{1},\ldots,\lambda_{m}$ such that $1=\lambda_{1}c_{1}+\cdots+\lambda_{m}c_{m}$.
For any choice of $\lambda_{1},\ldots,\lambda_{m}$ and any integer $n$ we define the matrix $A(n,(c_{1},\ldots,c_{m}))= (\lambda_{1}n,\ldots,\lambda_{m}n)\in\ZZ^{1\times m}$ and the matrix
\[
C(c_{1},\ldots,c_{m})=\left( \begin{array}{ccccc}
 -c_{2} &  c_{1} &   &  &\  \\
 -c_{3} &  & c_{1} &  &\  \\
& & &  \ddots &  \\
 -c_{m} &  &  &   &\ c_{1}
\end{array} \right)
	 \in\ZZ^{(m-1)\times m}.
\]

\begin{Theorem}\label{inverse_construction}
	Let $T=(n_1, n_2,  \dots ,n_s)\in \ZZ^{1\times s}$.  Let $\cb_1,\ldots,\cb_s$ be any set of  vectors having full support and each with the greatest common divisor of all its components equal to $1$, with $\cb_j\in\ZZ^{m_j}$ for some $m_j\geq 1$. In the case that $\Delta _T=\{\emptyset\}$ each ${\bf c}_j=(c_{j1},\ldots,c_{jm_j})\in\ZZ^{m_j}$ has the  first component positive and at least one component negative, while in the case that $\Delta _T=\{\emptyset, \{i\}\}$ then the same is true for all ${\bf c}_j$ with $j\not =i$. Define  $p=1+\sum_{i=1}^s (m_i-1)$ and $q=\sum_{i=1}^s m_i$. Then the toric ideal $I_A$ is strongly robust, where
\[
{\footnotesize A=\
\left( \begin{array}{cccc}
 A_1 & \ A_2 & \cdots & \ A_s \\
 C_1 &\ 0\ & \cdots & 0 \\
 0 &\ C_2\ & \cdots & 0 \\
\vdots&\vdots & \ddots & \vdots \\
 0 &\ 0 & \cdots &\ C_s
\end{array} \right) } \in \ZZ^{p\times q},
\]
$A_j=A(n_j, (c_{j1},\ldots,c_{jm_j}))$ and $C_j=C(c_{j1},\ldots,c_{jm_j})$ for all $j=1,\ldots,s$.
	\end{Theorem}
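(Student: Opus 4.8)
The plan is to recognize $A$ as a generalized Lawrence matrix whose $s$ column-blocks are precisely its bouquets, whose bouquet ideal is $I_T$, and whose bouquet signatures are governed by the sign patterns of the vectors $\mathbf{c}_j$. Once this is in place, one reads off the set $\omega$ of non-mixed bouquets and concludes strong robustness from Theorem \ref{face}.

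First I would compute $\Ker_{\mathbb{Z}}(A)$ directly from the block form. Writing a kernel vector as $\mathbf{z}=(\mathbf{z}_1,\dots,\mathbf{z}_s)$ according to the blocks, the block-diagonal rows force $C_j\mathbf{z}_j=\mathbf{0}$ for each $j$; since $C_j$ has rank $m_j-1$ and the entries of $\mathbf{c}_j$ are coprime, $\Ker_{\mathbb{Z}}(C_j)=\mathbb{Z}\mathbf{c}_j$, so $\mathbf{z}_j=u_j\mathbf{c}_j$ for a unique $u_j\in\mathbb{Z}$. The top row then reads $\sum_j u_j A_j\mathbf{c}_j=\sum_j u_j n_j=0$, using $A_j\mathbf{c}_j=n_j\sum_k\lambda_{jk}c_{jk}=n_j$. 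Hence $\Ker_{\mathbb{Z}}(A)=\{D(\mathbf{u}):\mathbf{u}\in\Ker_{\mathbb{Z}}(T)\}$, where $D$ is exactly the map associated with $\mathbf{c}_1,\dots,\mathbf{c}_s$; moreover $\Ker_{\mathbb{Z}}(A)\cap\mathbb{N}^q=\{\mathbf{0}\}$ follows from the sign hypotheses, so $I_A$ is a genuine toric ideal.

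Next I would extract the bouquet data. From the description of $\Ker_{\mathbb{Z}}(A)$, the Gale transform of the column indexed by $(j,k)$ equals $c_{jk}G_j$, where $G_j$ is the $j$-th coordinate functional on $\Ker_{\mathbb{Z}}(T)$, i.e.\ the $j$-th Gale row of $T$. Thus columns within one block have proportional Gale transforms, while columns in blocks $j\ne j'$ do not, because $G_j,G_{j'}$ are non-proportional; this is precisely the simplicity of the monomial-curve ideal $I_T$ for $s\ge 3$ established in Section \ref{section:prelim}. Therefore the bouquets of $A$ are exactly the $s$ blocks, all non-free, and $\mathbf{c}_{B_j}=\mathbf{c}_j$ whenever $c_{j1}>0$ (the normalization of $\mathbf{c}_{B_j}$ forces its first nonzero coordinate positive). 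The computation $\mathbf{a}_{B_j}=\sum_k c_{jk}\mathbf{a}_{jk}=(n_j,0,\dots,0)$, which uses $C_j\mathbf{c}_j=\mathbf{0}$, then identifies the bouquet ideal of $I_A$ with $I_T$. This structural step is essentially the content of \cite[Section 2]{PTV}; the only delicate point is the sign bookkeeping that yields $\mathbf{c}_{B_j}=\mathbf{c}_j$ on the nose.

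Finally, by \cite[Lemma 1.6]{PTV} the bouquet $B_j$ is mixed exactly when $\mathbf{c}_j$ has entries of both signs and non-mixed exactly when all its nonzero entries have the same sign. In the case $\Delta_T=\{\emptyset\}$ every $\mathbf{c}_j$ has $c_{j1}>0$ together with a negative entry, so every $B_j$ is mixed and $\omega=\{j:B_j\text{ non-mixed}\}=\emptyset$. In the case $\Delta_T=\{\emptyset,\{i\}\}$ the same reasoning makes $B_j$ mixed for every $j\ne i$, while $B_i$ is mixed or non-mixed according as $\mathbf{c}_i$ has entries of both signs or of a single sign; hence $\omega=\emptyset$ or $\omega=\{i\}$. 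In either case $\omega$ is a face of $\Delta_T$, so the $T_\omega$-robust ideal $I_A$ is strongly robust by Theorem \ref{face}. The main obstacle is the structural identification of the bouquets and the bouquet ideal; the signature analysis and the final appeal to Theorem \ref{face} are then immediate.
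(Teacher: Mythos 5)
Your proof is correct and follows essentially the same route as the paper: identify $A$ as a generalized Lawrence matrix whose blocks are the bouquets, with bouquet ideal $I_T$ and signature read off from the signs of the $\cb_j$, so that $\omega$ is a face of $\Delta_T$ and Theorem \ref{face} applies. The only difference is that the paper delegates the entire structural step (kernel computation, bouquet identification, mixedness criterion) to a citation of \cite[Theorem 2.1]{PTV}, whereas you verify it by hand; your verification is sound.
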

	
	\begin{proof}
	    The matrix $A$ is a generalized Lawrence matrix, see \cite[Theorem 2.1]{PTV} with bouquet ideal the toric ideal of the monomial curve $I_T$,  with all bouquets  mixed except possibly the $B_i$, in the case that $\Delta _T=\{\emptyset, \{i\}\}$. Therefore the toric ideal is $T_{\omega}$-robust,
	    where $\omega$ is either the empty set or $\{ i\}$. In both cases $\omega$
	    is a face of the strongly robust simplicial complex  $\Delta _T$. Thus the toric ideal is strongly robust, see Theorem \ref{face}.\qed

  \end{proof}

	 \begin{Example} {\em
	 Theorem \ref{inverse_construction} provides a way to produce examples of strongly robust ideals with bouquet ideal the ideal of any monomial curve. Take for example the monomial curve with defining matrix $T=(4, 5, 6)$. The toric ideal $I_T$ is a complete intersection on $5$ thus according to Theorem \ref{curveA3} the strongly robust complex $\Delta _T$ is equal to $\{\emptyset, \{2\}\}$. Choose three integer vectors, one for each bouquet,
	 of any dimension with full support and the greatest common divisor of all its components equal to $1$ such that they   have a positive first component and at least one component negative except possibly for the second vector which may have all components positive. For example choose ${\bf c}_1=(2, -1, -2023)$,  ${\bf c}_2=(10, 2024, 7, 4)$ and ${\bf c}_3=(5, 3, -2029)$. For each vector
${\bf c}=(c_{1},\ldots,c_{m})\in\ZZ^{m}$ choose
integers $\lambda_{1},\ldots,\lambda_{m}$ such that $1=\lambda_{1}c_{1}+\cdots+\lambda_{m}c_{m}$. For example $1=0\cdot 2+(-1)\cdot(-1)+0\cdot(-2023)$, $1=(-1)\cdot 10+0\cdot 2024+1\cdot 7+1\cdot 4$ and $1=2\cdot 5+(-3)\cdot 3+0\cdot(-2029).$ Then Theorem \ref{inverse_construction}
says that the toric ideal $I_A$ is strongly robust, where
\[
{\footnotesize A=\
\left( \begin{array}{cccccccccc}
 0 & -4 & 0 &  -5 & 0 & 5 & 5 & 12 & -18 & 0 \\
 1 & 2  & 0 & 0 & 0 & 0 & 0 & 0 & 0 & 0 \\
 2023 & 0 & 2 & 0 & 0 & 0 & 0 & 0 & 0 & 0 \\
0 & 0 & 0 & -2024 & 10 & 0 & 0 & 0 & 0 & 0  \\
0 & 0 & 0 & -7 & 0 & 10 & 0 & 0 & 0 & 0   \\
0 & 0 & 0 & -4 & 0 & 0 & 10 & 0 & 0 & 0  \\
0 & 0 & 0 & 0 & 0 & 0 & 0 & -3 & 5 & 0  \\
0 & 0 & 0 & 0 & 0 & 0 & 0 & 2029 & 0 & 5  \\
\end{array} \right) } .
\]
	 }
	 \end{Example}

According to the following theorem, {\em all} strongly robust ideals with bouquet ideal the ideal of a monomial curve are produced like in the above example.
\begin{Theorem}\label{all_toric_is_gen_lawrence}
  Let $I_A$ be any toric ideal which is strongly robust and such that its bouquet ideal is the ideal of a monomial curve. Then there exists a
  generalized Lawrence matrix $A'$ such that $I_A=I_{A'}$, up to permutation of column indices, where
\[
{\footnotesize A'=\
\left( \begin{array}{cccc}
 A_1 & \ A_2 & \cdots & \ A_s \\
 C_1 &\ 0\ & \cdots & 0 \\
 0 &\ C_2\ & \cdots & 0 \\
\vdots&\vdots & \ddots & \vdots \\
 0 &\ 0 & \cdots &\ C_s
\end{array} \right) }  \in \ZZ^{p\times q},
\]
with matrices $A_j=A(n_j, (c_{j1},\ldots,c_{jm_j}))$ and $C_j=C(c_{j1},\ldots,c_{jm_j})$ for some  $T=(n_1, n_2,  \dots ,n_s)\in\ZZ^{1\times s}$, and $\cb_j\in\ZZ^{m_j}$ for some $m_j\geq 1$, for all $j=1,\ldots,s$,  are integer vectors having full support and each one with the greatest common divisor of all its components equal to $1$. In the case that $\Delta _T=\{\emptyset \}$ each ${\bf c}_j=(c_{j1},\ldots,c_{jm_j})\in\ZZ^{m_j}$ has the first component positive and at least one component negative.
 In the case that $\Delta _T=\{\emptyset, \{i\}\}$ each ${\bf c}_j$ with $j\neq i$ has the  first component positive and at least one component negative, while ${\bf c}_i$ may have all components positive.
\end{Theorem}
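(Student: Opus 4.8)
The plan is to derive the normal form directly from the structure theory of bouquets and then let strong robustness dictate the signs. First I would apply \cite[Corollary 2.3, Theorem 2.1]{PTV}: since $I_A$ is a toric ideal, after a permutation of the columns there is a generalized Lawrence matrix $A'$ with $\Ker_{\ZZ}(A')=\Ker_{\ZZ}(A)$, and hence $I_A=I_{A'}$, because two matrices with the same kernel lattice define the same toric ideal. I would then read off the blocks of $A'$: its lower blocks $C_1,\dots,C_s$ record the individual bouquets $B_1,\dots,B_s$, and the hypothesis that the bouquet ideal is the ideal of the monomial curve $T=(n_1,\dots,n_s)$ identifies the top block as being assembled from the scalars $n_j$, so that $A_j=A(n_j,\mathbf{c}_j)$ and $C_j=C(\mathbf{c}_j)$ with $\mathbf{c}_j=\mathbf{c}_{B_j}$ the bouquet vector of the $j$-th bouquet.

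Next I would record the intrinsic properties of these bouquet vectors. Since $I_T$ is the ideal of a monomial curve with $s\ge 3$, every index lies in the support of some circuit, so no column $\mathbf{a}_{B_j}$ is free; hence every bouquet $B_j$ is non-free and $\mathbf{c}_{B_j}$ has full support on its block. By the definition of $\mathbf{c}_{B_j}$ recalled in Section~\ref{section:prelim}, its entries are the Gale coordinates divided by their greatest common divisor, so the components of $\mathbf{c}_j$ have greatest common divisor $1$, and the accompanying normalization guarantees that the first component $c_{j1}$ is positive. This already yields every claim of the theorem except the precise sign pattern.

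Finally I would convert strong robustness into the sign conditions. Setting $\omega=\{j\in[s]\mid B_j \text{ is non-mixed}\}$, Theorem~\ref{face} says $I_A$ is strongly robust if and only if $\omega$ is a face of $\Delta_T$, and Theorem~\ref{Delta} says $\Delta_T$ is either $\{\emptyset\}$ or $\{\emptyset,\{i\}\}$ for a single $i$. In the first case $\omega=\emptyset$, so every bouquet is mixed; in the second case $\omega\subseteq\{i\}$, so every $B_j$ with $j\ne i$ is mixed while $B_i$ need not be. Using \cite[Lemma 1.6]{PTV}, a non-free bouquet is mixed exactly when its bouquet vector has both a positive and a negative coordinate, and non-mixed exactly when all of its coordinates are positive. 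Combining this with $c_{j1}>0$, each mixed bouquet forces $\mathbf{c}_j$ to have positive first component and at least one negative component, while in the case $\Delta_T=\{\emptyset,\{i\}\}$ the vector $\mathbf{c}_i$ may have all components positive; this is precisely the stated dichotomy.

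I expect the main obstacle to be the first paragraph, namely matching the generic generalized Lawrence normal form of \cite{PTV} to the specific block shape $A_j=A(n_j,\mathbf{c}_j)$, $C_j=C(\mathbf{c}_j)$ and verifying that the vectors appearing in the blocks $C_j$ are exactly the bouquet vectors $\mathbf{c}_{B_j}$. Once that identification is secured, the remaining steps are straightforward translations of the structural results already established, since the sign analysis only uses \cite[Lemma 1.6]{PTV} together with Theorems~\ref{face} and~\ref{Delta}.
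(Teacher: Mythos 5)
Your proposal is correct and follows essentially the same route as the paper's proof: invoke \cite[Corollary 2.3]{PTV} for the generalized Lawrence normal form, use Theorem~\ref{Delta} together with Theorem~\ref{face} to force $\omega\subseteq\{i\}$ (so all bouquets except possibly $B_i$ are mixed), and translate mixedness into the sign pattern of the $\cb_j$ via \cite[Lemma 1.6]{PTV}. The extra details you supply (non-freeness of the bouquets, full support and gcd $1$ of the $\cb_{B_j}$) are points the paper leaves implicit in its citation of \cite{PTV}, but they do not change the argument.
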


\begin{proof} By hypothesis, the ideal $I_A$ is strongly robust with its bouquet ideal $I_T$, where $T=(n_1, n_2,  \dots ,n_s)\in\ZZ^{1\times s}$. By Theorem~\ref{Delta} the strongly robust complex $\Delta _T$ is either $\{\emptyset \}$ or $\{\emptyset, \{i\}\}$
for one $i\in [s]$. Then the ideal $I_A$ is either $T_{\empty}$ or $T_{\{i\}}$-robust,
therefore all bouquets of $I_A$ are mixed with the possible exception of the $i$-th bouquet.

For any integer matrix $A$, there exists a generalized Lawrence matrix $A'$ such that $I_A=I_{A'}$, up to permutation of column indices, by  \cite[Corollary 2.3]{PTV}.
Since the bouquet ideal is the ideal $I_T$ and all bouquets of $I_A$ are mixed with the possible exception of the $i$-th bouquet,  the vectors  ${\bf c}_j$ have a positive first component and at least one component negative, with the possible exception of ${\bf c}_i$ which may have all components positive. \qed

\end{proof}

  \begin{Example} \label{E}{\em Consider the matrix $A=(\ab_1,\ldots,\ab_{11})\in\ZZ^{8\times 11}$, given by
 \[
{\footnotesize A= \left( \begin{array}{ccccccccccc}
 36 & 60 & 4 &  40 & 64 & 39 & 1 & 72 & 84 & 12 & 4 \\
 12 & 20  & 4 & 8 & 24 & -2 & 1 & 12 & 4 & 0 & 4 \\
 36 & 80 & 4 & 48 & 88 & 39 & 1 & 84 & 84 & 12 & 4 \\
 60 & 100 & 12 & 16 & 112 & 33 & 4 & 120 & 104 & 36 & 24  \\
 24 & 40 & 8 & 24 & 48 & -4 & 2 & 36 & 8 & 0 & 8  \\
 12 & 20 & 4 & -12 & 24 & -2 & 1 & 24 & 8 & 12 & 8  \\
 12 & 20 & 4 & -12 & 24 & -2 & 1 & 24 & 12 & 12 & 12  \\
 24 & 40 & 0 & 4 & 40 & 39 & 1 & 60 & 84 & 24 & 4
\end{array} \right) } .
\]
 Using \texttt{4ti2} \cite{4ti2} we can see that the toric ideal $I_A$ is strongly robust and a Gale transform of the matrix $A$ is
\[
{\footnotesize
\left( \begin{array}{cccc}
 5 & 40 & 779 &  -13642   \\
 -18 & -162  & -3198 & 56004 \\
 -15 & -120 & -2337 & 40926   \\
 0 & 6 & 123 & -2154   \\
 15 & 135 & 2665 & -46670    \\
 0 & 0 & 4 & -72   \\
 0 & 0 & 8 & -144   \\
 0 & -4 & -82 & 1436    \\
 0 & 0 & 0 & 1   \\
 0 & 14 & 287 & -5026   \\
 0 & 0 & 0 & -1
\end{array} \right) } .
\]

The toric ideal $I_A$ has five bouquets $B_1=\{a_1, a_3\}$, $B_2=\{a_2, a_5\}$, $B_3=\{a_6, a_7\}$, $B_4=\{a_4, a_8, a_{10}\}$,  $B_5=\{a_9, a_{11}\}$ all of them being mixed except the third one, which is non-mixed. The corresponding $\cb_B$ vectors  are: $\cb_{B_1}=(1,0,-3,0,0,0,0,0,0,0,0)$
 \[\cb_{B_2}=(0, 6, 0, 0, -5, 0, 0, 0, 0, 0, 0), \ \ \cb_{B_3}=(0,0,0,0,0,1,2,0,0,0,0), \]
 \[\cb_{B_4}=(0,0,0,3,0,0,0,-2,0,7,0), \ \ \cb_{B_5}=(0,0,0,0,0,0,0,0,1,0,-1).\]

 The bouquet ideal is the toric ideal of the matrix $A_B=(\ab_{B_1},\ab_{B_2},\ab_{B_3},\ab_{B_4},\ab_{B_5})$, that is
 \[
{\footnotesize A_B=\
\left( \begin{array}{ccccc}
 24 & 40 & 41 &  60 & 80  \\
 0 & 0  & 0 & 0 & 0 \\
 24 & 40 & 41 & 60 & 80  \\
 24 & 40 & 41 & 60 & 80  \\
 0 & 0 & 0 & 0 & 0   \\
 0 & 0 & 0 & 0 & 0  \\
 0 & 0 & 0 & 0 & 0  \\
 24 & 40 & 41 & 60 & 80
\end{array} \right) }.
\]
 The bouquet ideal $I_{A_B}$ is exactly the same as the toric ideal of the monomial curve for $T=(24, 40, 41, 60, 80)$ for
 which we know
that the strongly robust complex is $\Delta _T=\{\emptyset, \{3\}\}$. Note that only the third bouquet is not mixed and thus $I_A$ is a $T_{\{3\}}$-robust ideal  which explains why it is strongly robust.

 Then, the generalized Lawrence matrix with the same kernel, after permutation of column indices, is
 \[
{\footnotesize A'=\
\left( \begin{array}{ccccccccccc}
 24 & 0 & 40 &  40 & 41 & 0 & 60 & 60 & 0 & 80 & 0 \\
 3 & 1  & 0 & 0 & 0 & 0 & 0 & 0 & 0 & 0 & 0 \\
 0 & 0 & 5 & 6 & 0 & 0 & 0 & 0 & 0 & 0 & 0 \\
 0 & 0 & 0 & 0 & -2 & 1 & 0 & 0 & 0 & 0 & 0  \\
 0 & 0 & 0 & 0 & 0 & 0 & 2 & 3 & 0 & 0 & 0  \\
 0 & 0 & 0 & 0 & 0 & 0 & -7 & 0 & 3 & 0 & 0  \\
 0 & 0 & 0 & 0 & 0 & 0 & 0 & 0 & 0 & 1 & 1
\end{array} \right) } .
\]
 Note that the permutation of column indices to bring the vectors of the same bouquet together gives the following isomorphism of the two kernels $\phi:\Ker_{\ZZ}(A)\mapsto \Ker_{\ZZ}(A')$,  $$\phi(u_1, u_2, u_3, u_4, u_5, u_6, u_7, u_8, u_9, u_{10}, u_{11})=(u_1, u_3, u_2, u_5, u_6, u_7, u_4, u_8, u_{10}, u_9, u_{11}).$$
 }
  \end{Example}

\bigskip

{\bf Acknowledgments.} The first author has been supported by the Royal Society Dorothy Hodgkin Research Fellowship DHF$\backslash$R1$\backslash$201246. The third author has been partially supported by the grant PN-III-P4-ID-PCE-2020-0029, within PNCDI III, financed by Romanian Ministry of Research and Innovation, CNCS - UEFISCDI.

\end{document}